\begin{document}

\title{Linear quotients of connected ideals of graphs}

\author{H. Ananthnarayan}
\address{Department of Mathematics, I.I.T. Bombay, Powai, Mumbai~400076, Maharashtra, India}
\email{ananth@math.iitb.ac.in}

\author{Omkar Javadekar}
\address{Department of Mathematics, I.I.T. Bombay, Powai, Mumbai~400076, Maharashtra, India}
\email{omkar@math.iitb.ac.in}

\author{Aryaman Maithani}
\address{Department of Mathematics, University of Utah, 155 South 1400 East, Salt Lake City, UT~84112, USA}
\email{maithani@math.utah.edu}

\subjclass{13F55,13D02,05E40}

\keywords{Independence complex, Stanley-Reisner ideal, edge ideal, linear resolution, shellable, linear quotients}

\begin{abstract}
    As a higher analogue of the edge ideal of a graph, we study the $t$-connected ideal $\conn_{t}$. 
    This is the monomial ideal generated by the connected subsets of size $t$. 
    For chordal graphs, we show that $\conn_{t}$ has a linear resolution iff the graph is $t$-gap-free, and that this is equivalent to having linear quotients. 
    We then show that if $G$ is any gap-free and $t$-claw-free graph, then $\conn_{t}(G)$ has linear quotients and, hence, linear resolution.
\end{abstract}

\maketitle

\section{Introduction}

Given a finite simple graph $G$, an object that is of interest to study is its edge ideal $\edge(G)$. Being a square-free monomial ideal, $\edge(G)$ appears as the \emph{Stanley-Reisner ideal} of a simplicial complex, namely the \emph{independence complex} $\Ind(G)$. 
As such, determining algebraic properties of this ideal in terms of the combinatorial properties of the graph (or the complex) is an active area of research. One such property is the ideal having a linear resolution (\Cref{defn:linear-resolution-regularity}). 
In 1988, Fr\"{o}berg~\cite{Froberg} completely characterised such graphs. These are precisely the graphs whose complements are chordal. 

A next generalisation of the edge ideal is the $t$-path ideal $\edge_{t}(G)$, for $t \ge 2$. 
Banerjee~\cite{Banerjee} showed that if $G$ is gap-free and claw-free, then $\edge_{t}(G)$ has a linear resolution if ${3 \le t \le 6}$; 
if further $G$ is whiskered-$K_{4}$-free, then $\edge_{t}(G)$ has a linear resolution for all $t \ge 3$. 
This paper is in a similar direction. 
Instead of the $t$-path ideal, we look at the \emph{$t$-connected} ideal (\Cref{defn:t-connected-ideal}), which we denote $\conn_{t}(G)$. 
This is the ideal generated by the monomials corresponding to the $t$-connected subsets of $G$. 
In particular, $\conn_{t}(G) \supset \edge_{t}(G)$ for all $t \ge 2$ with equality if $t = 2, 3$. 
If $G$ is claw-free, then equality holds for $t = 4, 5$ as well. 
Consequently, we recover Banerjee's result for ${3 \le t \le 5}$ by obtaining the stronger result that $\edge_{t}(G)$ has linear quotients (see \Cref{cor:gap-free-claw-free-small-path-ideal-linear-quotients}).

We look at the notions of being \emph{$t$-gap-free} and \emph{$t$-claw-free}. These generalise the usual notions of being gap-free and claw-free, and are not stronger than them. More precisely, we have
\begin{align*}
    \text{gap-free $\Leftrightarrow$ $2$-gap-free $\Rightarrow$ $3$-gap-free $\Rightarrow$ $4$-gap-free $\Rightarrow$ $\cdots$} & \\
    \text{claw-free $\Leftrightarrow$ $3$-claw-free $\Rightarrow$ $4$-claw-free $\Rightarrow$ $5$-claw-free $\Rightarrow$ $\cdots$} &
\end{align*}

We show (\Cref{thm:chordal-LQ-LR-gap-equivalent}) that for a chordal graph $G$ and $t \ge 2$, $\conn_{t}(G)$ has a linear resolution iff $G$ is $t$-gap-free. 
This fulfils the goal of describing a(n algebraic) property of the ideal purely in terms of the (combinatorial) structure of the graph. 
This characterisation does not hold in general (\Cref{thm:failure-for-cycles}). 
We then show (\Cref{thm:gap-free-t-claw-free-LQ}) that for \emph{all} $t \ge 3$, if $G$ is gap-free and $t$-claw-free, then $\conn_{t}(G)$ has a linear resolution, which is similar in spirit to Banerjee's result. 

In~\cite{DeshpandeRoySinghTuyl}, the authors look at these connected ideals, where they partially generalise Fr\"{o}berg's result by showing that if $G$ is co-chordal, then $\conn_{t}(G)$ is \emph{vertex splittable} for all $t \ge 2$. 
We note that being vertex splittable is a stronger condition than having linear quotients. 
At the same time, being co-chordal is also a stronger condition than being gap-free. 
There are no implications between co-chordal and ($t$-)claw-free.

In this paper, we prove that the desired ideals have linear resolutions by proving that they have \emph{linear quotients} (\Cref{defn:linear-quotients}). This term were introduced in~\cite{HerzogTakayama}. 
This definition already appeared in a different form in~\cite{BatziesWelker}, where they called such ideals \emph{shellable} and showed that such ideals have a minimal free Lyubeznik resolution. 
In~\cite[Theorem 2.7]{JahanZheng}, it was shown that such ideals have componentwise linear resolution. 
In our setup, our ideals will be equigenerated, so linear quotients would imply linear resolutions. In contrast to having a linear resolution, the property of a monomial ideal having linear quotients is independent of the field (\Cref{rem:linear-quotients-independent-field}). 
The same is not true for the property of having a linear resolution. A typical example is the Stanley-Reisner ideal of the triangulation of $\mathbb{R}\mathbb{P}^{2}$, as mentioned in~\cite{Reisner}. 
This ideal has a linear resolution precisely if the characteristic of the underlying field is not $2$. However, we note that for monomial ideals generated in degree two, linear quotients and linear resolutions are equivalent, see~\cite[Theorem 3.2]{HerzogHibiZheng}. 
In particular, Fr\"{o}berg's theorem completely characterises which graphs have edge ideals having linear quotients.

While we do not use any theory of simplicial complexes, we draw the connections here. 
For a graph $G$ and $r \ge 1$, the \deff{$r$-independence complex} $\Ind_{r}(G)$ of $G$ is defined to be the collection of subsets $C \subset V(G)$ such that each connected component of the induced subgraph $G[C]$ has at most $r$ vertices. 
This is a simplicial complex, and $\conn_{t}(G)$ is precisely the Stanley-Reisner ideal of $\Ind_{t - 1}(G)$. 
In particular, $\conn_{t}(G)$ having linear quotients is equivalent to the dual complex $\Ind_{t - 1}(G)^{\vee}$ being shellable. 
In~\cite{AbdelmalekDeshpandeGoyalRoySingh} it was shown that $\Ind_{r}(T)$ is shellable for all trees $T$ and all $r \ge 1$. 
As trees form a subclass of chordal graphs, we have answered precisely the question of when the dual is shellable (\Cref{thm:chordal-LQ-LR-gap-equivalent}). 
Note that $\Ind_{1}(G) = \Ind(G)$, so these ideals are natural generalisations of the edge ideal.

The paper is organised as follows. 
In \Cref{sec:preliminaries}, we introduce the relevant preliminaries on graph theory and graded resolutions. 
In particular, we define linear resolutions and linear quotients, and remark that the latter notion is field independent. 
In \Cref{sec:t-gap-free-graphs}, we define the term \emph{$t$-gap-free}, and deduce properties of gap-free graphs. 
In \Cref{sec:t-connected-deals}, we define our ideal $\conn_{t}$ of study and note $t$-gap-free being a necessary condition for these ideals to have linear resolutions. 
In \Cref{sec:linear-quotients-for-chordal}, we show that this necessary condition is sufficient for chordal graphs. 
In \Cref{sec:linear-quotients-gap-free-t-claw-free}, we define the notion of a graph being $t$-claw-free, and prove that gap-free and $t$-claw free imply linear resolution of $\conn_{t}$ for $t \ge 3$. 
In \Cref{sec:further-questions}, we note some questions that were motivated by the computational results.

\textbf{Acknowledgements.} We thank P. Deshpande and A. Singh for organising the NCM workshop \emph{Cohen Macaulay simplicial complexes in graph theory (2023)} in CMI\@. 
This workshop introduced us to this field and provided an environment for fruitful discussions. 
We also thank S. Selvaraja for drawing our attention to~\cite[Theorem 1.4]{HaWoodroofe} which led to us proving the converse for chordal graphs. 
We are grateful to the anonymous referee for proofreading the paper carefully and for pointing out the precise result known in literature about claw-free and gap-free graphs having linear resolutions, see~\cite[Theorem 1.1]{Banerjee} and \Cref{cor:gap-free-claw-free-small-path-ideal-linear-quotients}. 
Several examples and conjectures were tested by the computer algebra systems \texttt{SageMath}~\cite{sagemath} and \texttt{Macaulay2}~\cite{M2}, and the package \texttt{nauty}~\cite{nauty}; the use of these is gratefully acknowledged. 

\section{Preliminaries} \label{sec:preliminaries}

\subsection{Graph Theory}

A \emph{graph} $G$ is an ordered tuple of finite sets $(V(G), E(G))$ such that $E(G)$ is some collection of subsets of $V(G)$ of size exactly two. The elements of $V(G)$ are called the \deff{vertices} of $G$, and elements of $E(G)$ the \deff{edges}. \newline
Given a set $C \subset V(G)$, we denote by $G[C]$ the induced subgraph on $C$. We say that $C$ is \deff{connected} if $G[C]$ is a connected graph. If further $\md{C} = t$, then we say that $C$ is \deff{$t$-connected}. \newline
Given a vertex $v \in V(G)$, we define $N(v) \coloneqq \{w \in V(G) : \{v, w\} \in E(G)\}$, i.e., $N(v)$ is the set of \deff{neighbours} of $v$. 
A vertex $v$ is called \deff{isolated} if $N(v) = \emptyset$, and a \deff{leaf} if $\md{N(v)} = 1$. \newline
Given a subset $C \subset V(G)$, we define $N(C) \coloneqq \bigcup_{v \in C} N(v) \setminus C$. 

For $t \ge 3$, $K_{1, t}$ denotes the graph with vertex set $\{0, 1, \ldots, t\}$ and edge set $\{\{0, i\} : 1 \le i \le t\}$. The \deff{center} of $K_{1, t}$ is the unique vertex which is not a leaf. 
A \deff{tree} is a connected graph with no cycles. 
A \deff{chordal graph} is a graph that contains no induced subgraph isomorphic to a cycle on four or more vertices. 

\begin{lem} \label{lem:at-least-two-non-cut-vertices}
    Let $G$ be a connected graph with $\md{V(G)} \ge 2$. \newline 
    Then, the set $\{v \in V(G) : G \setminus v \text{ is connected}\}$ has cardinality at least two.
\end{lem}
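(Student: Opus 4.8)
The statement is the classical fact that every finite connected graph on at least two vertices has at least two non-cut-vertices. I would prove this by induction on $\md{V(G)}$, or — more cleanly — by extracting two non-cut-vertices from a spanning tree and then observing that being a non-cut-vertex of a spanning subgraph is stronger than being one of $G$. Let me sketch the spanning-tree approach first since it is shortest.

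\textbf{Spanning-tree approach.} Let $T$ be a spanning tree of $G$ (which exists since $G$ is connected). Since $\md{V(T)} = \md{V(G)} \ge 2$, the tree $T$ has at least two leaves; call them $u$ and $w$. I claim each leaf $v$ of $T$ satisfies: $G \setminus v$ is connected. Indeed, $T \setminus v$ is still a tree on $V(G) \setminus \{v\}$ (removing a leaf from a tree leaves a tree), hence connected; and $T \setminus v$ is a spanning subgraph of $G \setminus v$, so $G \setminus v$ is connected as well. Applying this to $u$ and $w$ shows the set in question has cardinality at least two. The only thing needing a short justification is that a finite tree on $\ge 2$ vertices has $\ge 2$ leaves, which itself follows from a degree/handshake count (sum of degrees is $2(\md{V(T)}-1)$, so not all vertices can have degree $\ge 2$) — or one can take the endpoints of a longest path in $T$.

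\textbf{Alternative: direct induction.} If one prefers not to invoke spanning trees, induct on $n = \md{V(G)}$. Base case $n = 2$: both vertices are non-cut (deleting either leaves a single vertex, which is connected). For the inductive step with $n \ge 3$, pick any $v \in V(G)$. If $G \setminus v$ is connected, fine — and then inside $G \setminus v$ find a non-cut-vertex distinct from... this requires care, so I would instead argue via a longest path $P = v_0 v_1 \cdots v_k$ in $G$: I claim $v_0$ and $v_k$ are non-cut-vertices of $G$. For $v_0$: any neighbour of $v_0$ must lie on $P$ (else $P$ could be extended), and $v_1$ is connected to all of them through $P \setminus v_0$, so deleting $v_0$ does not disconnect $G$; symmetrically for $v_k$. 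Since $k \ge 1$ (as $G$ is connected with $\ge 2$ vertices, it has an edge), $v_0 \ne v_k$, giving two distinct non-cut-vertices.

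\textbf{Main obstacle.} There is no real obstacle here — the result is elementary. The only point requiring mild care is the argument that a longest path's endpoints are non-cut-vertices (or, in the spanning-tree version, that removing a leaf keeps a tree connected and that a tree has two leaves). I would present the spanning-tree proof as the primary one for its brevity, and would make sure to state explicitly the sub-fact used, namely that a tree on at least two vertices has at least two leaves, with its one-line justification.
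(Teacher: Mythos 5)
Your spanning-tree argument is exactly the paper's proof (the paper's entire proof is ``Pass to a spanning tree and pick two leaves''), and you correctly fill in the details: a tree on at least two vertices has two leaves, and deleting a leaf of a spanning tree keeps a connected spanning subgraph of $G \setminus v$. The proposal is correct and takes essentially the same approach.
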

\begin{proof} 
    Pass to a spanning tree and pick two leaves.
\end{proof}

\begin{lem} \label{lem:connected-disjoint-union}
    Let $C \subset V(G)$ be connected. If $C = A \sqcup B$ with $A$, $B$ nonempty, then there exist vertices $a \in A$ and $b \in B$ such that $\{a, b\}$ is an edge.
\end{lem}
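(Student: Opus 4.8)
The plan is to argue by contradiction using the connectedness of $G[C]$. Suppose $C = A \sqcup B$ with $A, B$ nonempty but no edge of $G$ joins a vertex of $A$ to a vertex of $B$. I would then consider the induced subgraph $G[C]$ and show that it splits as a disjoint union of the (nonempty) induced subgraphs $G[A]$ and $G[B]$ with no edges between them, contradicting that $G[C]$ is connected.

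Concretely, pick any $a \in A$ and any $b \in B$. Since $G[C]$ is connected and $a, b$ both lie in $C$, there is a path $a = v_0, v_1, \ldots, v_k = b$ in $G[C]$. Since $v_0 \in A$ and $v_k \in B$, there is a smallest index $i$ with $v_i \in B$; then $i \ge 1$ and $v_{i-1} \in A$ by minimality, so $\{v_{i-1}, v_i\}$ is an edge with $v_{i-1} \in A$ and $v_i \in B$. Taking $a \coloneqq v_{i-1}$ and $b \coloneqq v_i$ gives the desired edge.

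There is essentially no obstacle here: the statement is a standard fact about connected graphs, namely that a connected graph cannot be partitioned into two nonempty vertex classes with no crossing edges. The only mild point to be careful about is that the edge $\{v_{i-1}, v_i\}$ lives in $E(G[C])$, hence in $E(G)$, which is immediate since $G[C]$ is an induced subgraph. One could alternatively phrase the proof without choosing a specific pair $(a,b)$ first: just take any path in $G[C]$ between a vertex of $A$ and a vertex of $B$ (which exists by connectedness) and extract the first crossing edge; I would likely write it this way for brevity.
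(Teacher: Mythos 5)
Your proof is correct and is essentially the same as the paper's: both take a path in $G[C]$ from a vertex of $A$ to a vertex of $B$ and extract the first edge crossing from $A$ to $B$. The initial framing by contradiction is unnecessary, since your concrete argument already produces the edge directly, exactly as in the paper.
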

\begin{proof} 
    Pick any $x \in A$ and $y \in B$. By hypothesis, there exists a path
    \begin{equation*} 
        x = v_{0} \to \cdots \to v_{n} = y
    \end{equation*}
    with $v_{i} \in C$. Since $v_{0} \in A$ and $v_{n} \in B$, there exists some $i$ such that $v_{i} \in A$ and $v_{i + 1} \in B$. These play the desired roles of $a$ and $b$.
\end{proof}

\subsection{Resolutions}

For any homogeneous ideal $I \subset S \coloneqq \mathbb{K}[x_{1}, \ldots, x_{n}]$, there exists a \deff{graded minimal free resolution} of $I$, i.e., an exact sequence
\begin{equation*} 
    0 \to \bigoplus_{j \in \mathbb{N}} S(-j)^{\beta_{n, j}(I)} \to \bigoplus_{j \in \mathbb{N}} S(-j)^{\beta_{n - 1, j}(I)} \to \cdots \to \bigoplus_{j \in \mathbb{N}} S(-j)^{\beta_{0}, j(I)} \to I \to 0.
\end{equation*}
The integer $\beta_{i, j}(I)$ is uniquely determined by $I$ and is called the \deff{$(i, j)$-th graded Betti number} of $I$. 

\begin{defn} \label{defn:linear-resolution-regularity}
    Suppose $d \ge 0$, and $I \subset S$ is a homogeneous ideal generated by its degree $t$ elements. We say that $I$ has a \deff{linear resolution} if $\beta_{i, j}(I) = 0$ for all $j \neq i + t$. The zero ideal is also said to have a linear resolution.

    The \deff{(Castelnuovo-Mumford) regularity} of $I$ is defined to be $\reg(I) \coloneqq \max\{j - i : \beta_{i, j}(I) \neq 0\}$.
\end{defn}

\begin{rem} \label{rem:linear-resolution-and-regularity}
    Given a homogeneous ideal $I \subset S$ generated in degree $t$, $I$ has a linear resolution iff $\reg(I) = t$.
\end{rem}

\begin{rem} \label{rem:regularity-I-S-mod-I}
    The above notions can be defined more generally for graded modules, not necessarily generated in the same degree. In particular, it makes sense to talk about the Betti numbers, linear resolutions, and regularity of $S/I$, where $I$ is a homogeneous ideal. The minimal resolutions of $I$ and $S/I$ can be obtained from one another. We refer the reader to~\cite{PeevaBook} for an introduction to graded resolutions. We just remark that $\reg(S/I) = \reg(I) - 1$. 
\end{rem}

\subsection{Linear Quotients} \label{subsec:linear-quotients}

\begin{defn} \label{defn:linear-quotients}
    Let $\mathbb{K}$ be a field, $S = \mathbb{K}[x_{1}, \ldots, x_{n}]$ the polynomial ring in $n$ variables, and $I \subset S$ a monomial ideal. We denote by $\mingens(I)$ the unique minimal monomial system of generators of $I$. We say that $I$ has \deff{linear quotients}, if there exists an order ${u_{1} < \cdots < u_{m}}$ on $\mingens(I)$ such that the colon ideal $\langle u_{1}, \ldots, u_{i - 1} \rangle : \langle u_{i} \rangle$ is generated by a subset of the variables, for $i = 2, \ldots, m$. Any such order is said to be an \deff{admissible order}.

    We consider the zero ideal to also be a monomial ideal having linear quotients.
\end{defn} 

\begin{rem} \label{rem:linear-quotients-independent-field}
    While we talked about the ideal and colons inside a polynomial ring over $\mathbb{K}$, the property of having linear quotients is independent of $\mathbb{K}$. Indeed, given two monomials $u$, $v$, we can define the colon $u : v$ to be the monomial $\lcm(u, v)/v$. Then, we have
    \begin{align*} 
        \langle u \rangle : \langle v \rangle &= \langle u : v \rangle, \\
        \langle u_{1}, \ldots, u_{m} \rangle : \langle v \rangle &= \langle u_{1} : v, \ldots, u_{m} : v \rangle
    \end{align*}
    for monomials $u_{1}, \ldots, u_{m}, u, v \in S$. Recall that a monomial $v$ is in a monomial ideal $I$ iff $v$ is divisible by some element of $\mingens(I)$. This shows that the property of a monomial ideal being generated by a subset of the variables is independent of the coefficient field. In turn, $I$ having linear quotients depends only on the set of monomials $\mingens(I)$ and not the base field $\mathbb{K}$. In particular, when discussing any of the various monomials ideals associated to graphs, we do not have to mention the base field when talking about linear quotients.
\end{rem}

\section{\texorpdfstring{$t$}{t}-gap-free graphs} \label{sec:t-gap-free-graphs}

A \deff{hypergraph} $\mathcal{H}$ is a tuple $(V(\mathcal{H}), \mathcal{E}(\mathcal{H}))$, where $V(\mathcal{H})$ is a finite set (whose elements are called \deff{vertices}), and $\mathcal{E}(\mathcal{H})$ is a subset of the power set of $V(\mathcal{H})$ (whose elements are called \deff{hyperedges}). We shall assume that every hyperedge has the same (nonzero) cardinality. In particular, there is no containment among distinct hyperedges. \newline
A \deff{matching} in $\mathcal{H}$ is a subset $\mathcal{M} \subset \mathcal{E}$ such that any two distinct elements of $\mathcal{M}$ are disjoint. A matching $\mathcal{M} = \{E_{1}, \ldots, E_{s}\}$ is said to be an \deff{induced matching} if for any $E \in \mathcal{E}$, we have
\begin{equation*} 
    E \subset E_{1} \sqcup \cdots \sqcup E_{s} \quad\Rightarrow\quad E = E_{i} \text{ for some $i$}.
\end{equation*}

We define the \deff{induced matching number} of $\mathcal{H}$ as 
\begin{equation*} 
    \gamma(\mathcal{H}) \coloneqq \max\{\md{\mathcal{M}} : \mathcal{M} \text{ is an induced matching in $\mathcal{H}$}\}.
\end{equation*}

Given a graph $G$ and an integer $t \ge 2$, we associate to it the hypergraph $\mathcal{H} \coloneqq \mathcal{H}(G, t)$ as follows: 
\begin{align*} 
    V(\mathcal{H}) &\coloneqq V(G), \\
    \mathcal{E}(\mathcal{H}) &\coloneqq \{C \subset V(G) : C \text{ is $t$-connected}\}.
\end{align*}
Recall that $t$-connected means that $G[C]$ is connected and $\md{C} = t$.

\begin{prop} \label{prop:gamma-H-one-equivalent}
    Let $G$ be a graph, $t \ge 2$, and $\mathcal{H} = \mathcal{H}(G, t)$. The following are equivalent:
    \begin{enumerate}[label=(\alph*)]
        \item $\gamma(\mathcal{H}) \le 1$.
        \item Given any two disjoint connected sets $C, C' \subset V(G)$ of cardinality $t$, there exists $c \in C$ and $c' \in C$ such that $\{c, c'\} \in E(G)$.
    \end{enumerate}
\end{prop}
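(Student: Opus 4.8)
The plan is to establish the contrapositive of each implication, which amounts to showing that $\gamma(\mathcal{H}) \ge 2$ holds \emph{if and only if} there exist two disjoint $t$-connected sets $C, C' \subset V(G)$ with no edge of $G$ joining $C$ to $C'$. (I read condition (b) with the evident correction that $c' \in C'$; as literally typeset the edge lies inside $C$, which would make (b) automatic, since every $t$-connected set with $t \ge 2$ already contains an edge.) The two inputs are exactly the two lemmas proved above: \Cref{lem:connected-disjoint-union} drives the passage from a non-joined pair to an induced matching, and \Cref{lem:at-least-two-non-cut-vertices} drives the reverse.

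For $\neg$(b)$\,\Rightarrow\,\neg$(a): suppose $C, C'$ are disjoint $t$-connected sets with no joining edge. I claim $\{C, C'\}$ is itself an induced matching, whence $\gamma(\mathcal{H}) \ge 2$. Let $E$ be any $t$-connected set with $E \subset C \sqcup C'$ and split it as $E = (E \cap C) \sqcup (E \cap C')$. If both pieces were nonempty, then \Cref{lem:connected-disjoint-union} applied to the connected set $E$ would produce an edge between a vertex of $E \cap C \subset C$ and a vertex of $E \cap C' \subset C'$, contrary to hypothesis. Hence $E \subset C$ or $E \subset C'$, and since $\md{E} = t = \md{C} = \md{C'}$ we conclude $E \in \{C, C'\}$, as required.

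For $\neg$(a)$\,\Rightarrow\,\neg$(b): fix an induced matching of size at least two and let $C, C'$ be two of its (distinct, hence disjoint) members. I claim no edge joins $C$ to $C'$. If $\{c, c'\}$ were such an edge with $c \in C$ and $c' \in C'$, then since $\md{C} = t \ge 2$, applying \Cref{lem:at-least-two-non-cut-vertices} to $G[C]$ yields a vertex $v \in C$ with $v \neq c$ for which $C \setminus \{v\}$ is connected. The set $E \coloneqq (C \setminus \{v\}) \cup \{c'\}$ is then connected (as $c'$ is adjacent to $c \in C \setminus \{v\}$), has exactly $t$ vertices, lies inside $C \sqcup C'$, and is distinct from both $C$ (it contains $c' \notin C$) and $C'$ (it contains $c \notin C'$). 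This contradicts $\{C, C'\}$ being part of an induced matching.

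I do not expect a genuine obstacle here; the one spot needing care is the construction of the auxiliary set $E$ in the last paragraph, where the choice $v \neq c$ is precisely what keeps $c$ inside $E$ and so separates $E$ from $C'$, while $c' \in E$ separates $E$ from $C$ — so that $E$ really does witness the failure of the induced-matching property. A minor point to double-check in passing is that $\md{E} = t$: this holds because $c' \notin C$, so adjoining $c'$ genuinely restores the vertex deleted when passing to $C \setminus \{v\}$.
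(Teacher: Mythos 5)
Your proof is correct and is essentially the paper's argument in contrapositive form: the same non-cut-vertex swap $(C\setminus\{v\})\cup\{c'\}$ (via \Cref{lem:at-least-two-non-cut-vertices}) in one direction, and the same crossing-edge-from-connectivity argument (the paper inlines the path argument that \Cref{lem:connected-disjoint-union} encapsulates) in the other. Your reading of the typo in (b) ($c'\in C'$ rather than $c'\in C$) is also the intended one.
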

\begin{proof} 
    (a) $\Rightarrow$ (b): Let $C, C'$ be as stated. Note that $\{C, C'\}$ is a matching in $H$. Since $\gamma(H) \le 1$, there is a third hyperedge $C''$ contained in $C \cup C'$. Pick vertices $x \in C \cap C''$ and $y \in C' \cap C''$. Since $C''$ is connected, there is a path 
    \begin{equation*} 
        x = x_{0} \to \cdots \to x_{n} = y
    \end{equation*}
    with each $x_{i} \in C'' \subset C \cup C'$. Since $x \in C$ and $y \in C'$, there is some $i$ such that $x_{i} \in C$ and $x_{i + 1} \in C'$. These are the desired $c$ and $c'$.

    (b) $\Rightarrow$ (a): Suppose $\{C, C'\}$ is a matching. We show that this is not an induced matching. By assumption, there is an edge $\{c, c'\} \in E(G)$ for some $c \in C$, $c' \in C'$. Since $t \ge 2$, \Cref{lem:at-least-two-non-cut-vertices} lets us pick a vertex $x \in C \setminus \{c\}$ such that $C \setminus \{x\}$ is connected. Now, $(C \setminus \{x\}) \sqcup \{c'\}$ is also connected, of cardinality $t$, contained in $C \cup C'$, and distinct from both $C$ and $C'$. 
\end{proof}

We give a name to the graphs satisfying the above condition.
\begin{defn}
    A graph $G$ is called \deff{$t$-gap-free} if $\gamma(\mathcal{H}(G, t)) \le 1$.
\end{defn}

We have the following chain of implications.
\begin{center}
    co-chordal $\Rightarrow$ $2$-gap-free $\Rightarrow$ $3$-gap-free $\Rightarrow$ $4$-gap-free $\Rightarrow$ $\cdots$.
\end{center}

Recall that graph $G$ is called \deff{gap-free} if $G$ is $2$-gap-free. 
In other words, if $e_{1} = \{a, b\}$ and $e_{2} = \{c, d\}$ are disjoint edges of $G$, then there is an edge connecting a vertex of $e_{1}$ with a vertex of $e_{2}$. 
In yet other words, $G$ contains no induced subgraph isomorphic to $P_{2} \sqcup P_{2}$. This formulation makes it clear that if $G$ is gap-free, then so is $G \setminus v$ for any $v \in V(G)$. 

\Cref{prop:gamma-H-one-equivalent} tells us that being $t$-gap-free generalises the above: If $C_{1}$ and $C_{2}$ are disjoint $t$-connected subsets of $G$, then there is an edge connecting a vertex of $C_{1}$ with a vertex of $C_{2}$. 

We note some connectivity properties of a gap-free graph $G$. 

\begin{prop} \label{prop:gap-free-union-connected-is-connected}
    Let $G$ be a gap-free graph. Let $C_{1}, \ldots, C_{n} \subset V(G)$ be connected subsets with $\md{C_{i}} \ge 2$ for all $i$. Then, $\bigcup_{i} C_{i}$ is connected.
\end{prop}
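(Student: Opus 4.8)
The plan is to argue by contradiction, reducing to the defining property of gap-freeness applied to a single pair of the $C_i$. Write $W \coloneqq \bigcup_{i} C_{i}$ and suppose, for contradiction, that $G[W]$ is disconnected. Then we may partition $W = X \sqcup Y$ with both parts nonempty and no edge of $G$ joining a vertex of $X$ to a vertex of $Y$; for instance, take $X$ to be the vertex set of one connected component of $G[W]$ and $Y \coloneqq W \setminus X$.

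Next I would observe that each $C_{i}$ is forced to lie entirely inside $X$ or entirely inside $Y$: indeed, $G[C_{i}]$ is connected and is a subgraph of $G[W]$, so $C_{i}$ cannot meet two distinct components of $G[W]$, and in particular cannot meet both $X$ and $Y$. Since $X$ and $Y$ are nonempty and $X \cup Y = W = \bigcup_{i} C_{i}$, there must be indices $i, j$ with $C_{i} \subset X$ and $C_{j} \subset Y$; in particular $C_{i} \cap C_{j} = \emptyset$. Now I use the hypothesis $\md{C_{i}}, \md{C_{j}} \ge 2$: since $G[C_{i}]$ is connected on at least two vertices, it contains an edge $e$, and likewise $G[C_{j}]$ contains an edge $f$. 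As $C_{i}$ and $C_{j}$ are disjoint, $e$ and $f$ are disjoint edges of $G$, so gap-freeness of $G$ produces an edge of $G$ joining a vertex of $e \subset X$ to a vertex of $f \subset Y$. This contradicts the choice of the partition $X \sqcup Y$, so $G[W]$ is connected.

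I do not expect a genuine obstacle here; the only point that needs care is the role of the hypothesis $\md{C_{i}} \ge 2$, which is exactly what lets us extract the edges $e$ and $f$ on which gap-freeness is invoked — and the statement really does fail without it, as witnessed by taking one $C_{i}$ to be a single isolated vertex. (An essentially equivalent formulation of the same argument: gap-freeness shows that any two of the $C_{i}$ either intersect or are joined by an edge, so the union $W$ is connected; phrasing it as a disconnection argument as above just makes the bookkeeping cleanest.)
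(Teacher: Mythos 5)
Your proof is correct, and the core mechanism is the same as the paper's: two disjoint connected sets of size at least two contain disjoint edges, and gap-freeness supplies an edge joining them. The paper organizes this as an induction reducing to $n = 2$ (splitting into the intersecting and disjoint cases) rather than your disconnection-by-components argument, but as you note yourself these are essentially the same proof with different bookkeeping.
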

\begin{proof} 
    By induction, we may assume $n = 2$. If $C_{1} \cap C_{2}$ is nonempty, then the result is true (without any gap-free hypothesis). \newline
    Thus, we may assume that $C_{1}$ and $C_{2}$ are disjoint. Since their cardinalities are at least two, there exist (necessarily disjoint) edges $e_{1} \subset C_{1}$ and $e_{2} \subset C_{2}$. Since $G$ is gap-free, we get an edge between them.
\end{proof}

\begin{cor} \label{cor:edge-from-C-to-outside}
    Let $G$ be a gap-free graph, and let $C, C' \subset V(G)$ be connected subsets of size at least two. If $C' \setminus C$ is nonempty (i.e., $C' \not\subset C$), then there exist $v \in C$ and $w \in C' \setminus C$ such that $\{v, w\}$ is an edge.
\end{cor}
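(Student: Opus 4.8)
The plan is to reduce the statement to the two facts already established: \Cref{prop:gap-free-union-connected-is-connected} (a union of connected sets of size at least two in a gap-free graph is connected) and \Cref{lem:connected-disjoint-union} (a connected set written as a disjoint union of two nonempty parts has an edge crossing the partition). Indeed, the conclusion we want is exactly the output of \Cref{lem:connected-disjoint-union} applied to the right set and the right partition, so the only real task is to produce a connected set to feed into it.

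First I would observe that $C \cup C'$ is connected. This is immediate from \Cref{prop:gap-free-union-connected-is-connected} applied to the two sets $C$ and $C'$: both are connected and of size at least two by hypothesis. This is the only place the gap-free assumption enters.

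Next I would decompose $C \cup C' = C \sqcup (C' \setminus C)$. This is a genuine disjoint union, and both parts are nonempty: $C$ because $\md{C} \ge 2$, and $C' \setminus C$ by the standing hypothesis $C' \not\subset C$. Applying \Cref{lem:connected-disjoint-union} to the connected set $C \cup C'$ with this partition yields vertices $v \in C$ and $w \in C' \setminus C$ with $\{v, w\} \in E(G)$, which is precisely the desired conclusion.

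As for obstacles, there is not really a hard step here — all of the content sits in the two cited results, and what remains is just to check that their hypotheses are met. If one preferred to avoid invoking \Cref{prop:gap-free-union-connected-is-connected}, one could instead split into the cases $C \cap C' = \varnothing$ and $C \cap C' \neq \varnothing$: in the first, pick disjoint edges $e_{1} \subset C$ and $e_{2} \subset C'$ and use gap-freeness directly; in the second, the conclusion follows purely from \Cref{lem:connected-disjoint-union} applied to $C'$ with the partition $C' = (C' \cap C) \sqcup (C' \setminus C)$, with no appeal to gap-freeness at all (since $C' \cap C \subset C$). The unified argument above is shorter, so that is the one I would write up.
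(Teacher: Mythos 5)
Your proof is correct and is essentially identical to the paper's: both apply \Cref{prop:gap-free-union-connected-is-connected} to conclude $C \cup C'$ is connected, then invoke \Cref{lem:connected-disjoint-union} on the partition $C \sqcup (C' \setminus C)$. The extra checks of nonemptiness and the alternative case split are fine but not needed beyond what the paper records.
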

\begin{proof} 
    By \Cref{prop:gap-free-union-connected-is-connected}, $C \cup C'$ is connected. We can write
    \begin{equation*} 
        C \cup C' = C \sqcup (C' \setminus C).
    \end{equation*}
    By \Cref{lem:connected-disjoint-union}, the result follows.
\end{proof}

\begin{obs} \label{obs:gap-free-isolated-vertices}
    If $G$ is gap-free, then $V(G)$ can be written as a union $\{\ell_{1}\} \sqcup \cdots \sqcup \{\ell_{n}\} \sqcup C$, where each $\ell_{i}$ has no neighbours, $C$ is connected, and $\md{C} \neq 1$. In other words, there is at most one component which has size greater than $1$.

    More generally, any subset $X \subset V(G)$ can be written in the above form, i.e., as a disjoint union of singletons and a non-singleton set, with these being the connected components of $G[X]$. Now, if $C \subset V(G)$ is connected, and $a \in C$, then $C \setminus a$ can be decomposed as above. The singletons $\{\ell_{i}\}$ are precisely the leaves of $G[C]$ that are connected to $a$.
\end{obs}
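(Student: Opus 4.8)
The plan is to reduce every assertion to the single fact that \emph{a gap-free graph has at most one connected component of size at least two}, and then transfer this to induced subgraphs. The first step is to record that gap-freeness is inherited by induced subgraphs: this is immediate from the ``no induced $P_{2} \sqcup P_{2}$'' description noted just before the statement, since deleting a vertex cannot create a new induced $P_{2} \sqcup P_{2}$, so $G \setminus v$ is gap-free for every $v \in V(G)$, and deleting the finitely many vertices of $V(G) \setminus X$ one at a time shows $G[X]$ is gap-free for every $X \subset V(G)$. Consequently it suffices to establish the decomposition for $V(G)$ itself, and the ``more generally'' clause then follows by applying it to the gap-free graph $G[X]$.

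To prove the key fact, suppose for contradiction that $D_{1} \neq D_{2}$ are two distinct connected components of $G$ with $\md{D_{1}} \ge 2$ and $\md{D_{2}} \ge 2$. Since each $D_{i}$ is connected with at least two vertices, it contains an edge $e_{i} \subset D_{i}$, and $e_{1}$, $e_{2}$ are disjoint because they lie in different components; gap-freeness then produces an edge joining a vertex of $e_{1}$ to a vertex of $e_{2}$, contradicting the fact that $D_{1}$ and $D_{2}$ are distinct components. Hence at most one component, call it $C$ (with $C = \emptyset$ if there is none), has size at least two; every other component is a singleton $\{\ell_{i}\}$, which by definition of a component is an isolated vertex, so $N(\ell_{i}) = \emptyset$. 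This proves the first two assertions.

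For the last assertion, apply the decomposition to the set $C \setminus a$ inside the gap-free graph $G[C]$, writing $C \setminus a = \{\ell_{1}\} \sqcup \cdots \sqcup \{\ell_{n}\} \sqcup C'$, these sets being the connected components of $G[C \setminus a]$. If $\md{C} = 1$ this is the empty decomposition and $G[C]$ has no leaves, so there is nothing to check; otherwise $\md{C} \ge 2$, and each $\ell_{i}$, being isolated in $G[C \setminus a]$, satisfies $N(\ell_{i}) \cap (C \setminus a) = \emptyset$, yet $\ell_{i}$ lies in the connected set $C$ with $\md{C} \ge 2$, so $N(\ell_{i}) \cap C \neq \emptyset$, forcing $N(\ell_{i}) \cap C = \{a\}$; thus $\ell_{i}$ is a leaf of $G[C]$ whose unique $C$-neighbour is $a$. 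Conversely, if $v \in C$ is a leaf of $G[C]$ with $N(v) \cap C = \{a\}$, then $N(v) \cap (C \setminus a) = \emptyset$, so $\{v\}$ is a connected component of $G[C \setminus a]$ and hence equals some $\{\ell_{i}\}$, identifying the singletons exactly as claimed. I do not expect a genuine obstacle here: the content is elementary once gap-freeness is passed to induced subgraphs, and the only points requiring care are that inheritance step and keeping the distinction between neighbours lying in $C$ and neighbours lying in $C \setminus a$.
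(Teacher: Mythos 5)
Your proof is correct and follows exactly the argument the paper intends: the paper states this as an unproved Observation, relying on the same two facts you make explicit, namely that gap-freeness passes to induced subgraphs (noted in the paper right after the definition of gap-free) and that two components of size $\ge 2$ would yield an induced $P_{2} \sqcup P_{2}$ (the same edge-picking argument as in \Cref{prop:gap-free-union-connected-is-connected}). Your careful identification of the isolated vertices of $G[C \setminus a]$ with the leaves of $G[C]$ adjacent to $a$ is also the intended reading, so there is nothing to correct.
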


\begin{prop} \label{prop:linear-quotients-combinatorial}
    Let $X$ be a finite set and $k \ge 1$. \newline
    There exists a total order $<$ on $\{C \subset X : \md{X} = k\}$ satisfying the following: if $C' < C$, then there exists $C'' < C$ such that $C'' \setminus C = \{x\}$ and $x \in C'$.
\end{prop}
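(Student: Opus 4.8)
The plan is to identify $X$ with $\{1, \ldots, n\}$ and let $<$ be the lexicographic order on the $k$-subsets of $X$, viewing each $k$-subset as the increasing list of its elements; equivalently, for distinct $k$-subsets $C, C'$ one declares $C' < C$ exactly when $\min(C \triangle C') \in C'$, where $\triangle$ denotes symmetric difference. This is a total order (it is the lexicographic monomial order restricted to squarefree monomials of degree $k$). The key reformulation to record first is: for distinct $C, C'$ — necessarily with $C \setminus C'$ and $C' \setminus C$ both nonempty, since $\md{C} = \md{C'} = k$ — we have $C' < C$ if and only if $\min(C' \setminus C) < \min(C \setminus C')$.

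Next, given $C' < C$, I would exhibit the required $C''$ by a single swap. Put $x := \min(C' \setminus C)$ and $y := \min(C \setminus C')$. By the reformulation, $x \notin C$, $y \in C$, and $x < y$; in particular $x \neq y$. Define
\[
    C'' := (C \setminus \{y\}) \cup \{x\}.
\]
Then $\md{C''} = k$ (we delete one element of $C$ and adjoin one element outside $C$), and $C'' \setminus C = \{x\}$ with $x \in C'$, exactly as demanded. Finally $C'' < C$: indeed $C'' \triangle C = \{x, y\}$ and $\min\{x, y\} = x \in C''$, so $C'' < C$ by definition of the order.

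There is no genuine obstacle here; the content is just choosing an order compatible with the single-element swap the statement asks for, and lexicographic order does this essentially by construction. I would also note that this proposition is the combinatorial core of the (known) fact that the squarefree Veronese ideal — the ideal generated by all squarefree monomials of a fixed degree — has linear quotients: translating ``the colon ideal of the generator $x_C$ against the earlier generators is generated by variables'' into a statement about the index sets yields precisely the displayed condition, which is how this lemma will later feed into the linear-quotients arguments for $\conn_t(G)$ after a coarser ``global'' ordering of the generators has been fixed.
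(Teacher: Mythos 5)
Your proposal is correct and follows essentially the same route as the paper: both use the lexicographic order on $k$-subsets (viewed as increasing tuples) and produce $C''$ from $C$ by a single swap that removes one element of $C$ and inserts $\min(C' \setminus C)$. The only cosmetic differences are that you characterise the order via $\min(C \triangle C')$ rather than the first index of disagreement, and you delete $\min(C \setminus C')$ where the paper deletes the entry of $C$ at the first disagreeing position; both choices verify the required property.
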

\begin{proof} 
    The lexicographic order works. 

    More precisely: to every $k$-element set, we associate a $k$-tuple by listing the elements in ascending order. That is,

    \begin{equation*} 
        \{a_{1}, \ldots, a_{k}\} \leftrightarrow (a_{1}, \ldots, a_{k})
    \end{equation*}
    with $a_{1} < \cdots < a_{k}$. Thus, we may describe an order on the set of (strictly) increasing $k$-tuples and check the corresponding property for this order.

    The order is defined as: For distinct tuples $\mathbf{a} = (a_{1}, \ldots, a_{k})$ and $\mathbf{b} = (b_{1}, \ldots, b_{k})$, let $i \in \{1, \ldots, k\}$ be the smallest index for which $a_{i} \neq b_{i}$. We set $\mathbf{b} < \mathbf{a}$ if $b_{i} < a_{i}$. 

    We now check that this order has the desired property. To this end, let $\mathbf{b} < \mathbf{a}$ be increasing $k$-tuples. As before, let $i$ be the smallest index where $\mathbf{a}$ and $\mathbf{b}$ differ. We can then write
    \begin{align*} 
        \mathbf{a} = (x_{1}, \ldots, x_{i - 1},\; & a_{i},\; a_{i + 1}, \ldots, a_{k}), \\
        \mathbf{b} = (x_{1}, \ldots, x_{i - 1},\; & b_{i},\; b_{i + 1}, \ldots, b_{k}),
    \end{align*}
    with $b_{i} < a_{i}$. This implies that the tuple 
    \begin{equation*} 
        \mathbf{c} = (x_{1}, \ldots, x_{i - 1},\; b_{i},\; a_{i + 1}, \ldots, a_{k})
    \end{equation*}
    is strictly increasing and we have $\mathbf{c} < \mathbf{a}$. Moreover, $\mathbf{c} \setminus \mathbf{a} = \{b_{i}\}$ with $b_{i} \in \mathbf{b}$.
\end{proof}

\begin{rem}
    The above is equivalent to saying that $\conn_{k}(K_{n})$ has linear quotients for all $k, n \ge 1$.
\end{rem}

\section{\texorpdfstring{$t$}{t}-connected ideals} \label{sec:t-connected-deals}

\begin{defn} \label{defn:t-connected-ideal}
    Given a graph $G$ and a field $\mathbb{K}$, we define $\mathbb{K}[G]$ to be the polynomial ring over $\mathbb{K}$ with variables $\{x_{v} : v \in V(G)\}$. \newline
    Given $t \ge 2$, we define the \deff{$t$-connected ideal} $\conn_{t}(G)$ as
    \begin{equation*} 
        \conn_{t}(G) \coloneqq \langle x_{i_{1}} \cdots x_{i_{t}} : \{i_{1}, \ldots, i_{t}\} \subset G \text{ is connected} \rangle \subset \mathbb{K}[G].
    \end{equation*}
\end{defn}
For ease of notation, given a subset $C \subset V(G)$, we denote by $x_{C}$ the product $\prod_{c \in C} x_{c}$. Thus, $\conn_{t}(G) = \langle x_{C} : C \subset V(G) \text{ is $t$-connected}\rangle$. Equivalently, $\conn_{t}(G)$ is the edge ideal of $\mathcal{H}(G, t)$. 

Note that the edge ideal of $G$ is $\conn_{2}(G)$, $\conn_{3}(G)$ coincides with the path ideal $\edge_{3}(G)$, and $\conn_{t}(G) \supset \edge_{t}(G)$ for $t \ge 4$. This containment can be strict, as is witnessed by $K_{1, t}$. See however \Cref{lem:claw-free-path-and-connected-ideal-coincide}.

We recall the following fact, which is a special case of~\cite[Corollary 3.9]{MoreyVillarreal}. The version we state below appears as~\cite[Theorem 1.4]{HaWoodroofe}.
\begin{thm}
    Let $\mathcal{H}$ be a hypergraph with edge ideal $I \subset S$, and suppose that every hyperedge of $\mathcal{H}$ has cardinality $t$. Then, $\reg(S/I) \ge (t - 1) \gamma(\mathcal{H})$.
\end{thm}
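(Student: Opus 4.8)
The plan is to produce, by means of Hochster's formula, a single nonvanishing graded Betti number of $S/I$, namely $\beta_{s, st}(S/I) \ne 0$ with $s \coloneqq \gamma(\mathcal{H})$; this at once gives $\reg(S/I) \ge st - s = (t-1)s$, which is what is wanted. We may assume $s \ge 1$, since otherwise $\mathcal{H}$ has no hyperedge, $I = 0$, and there is nothing to prove.

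First I would fix an induced matching $\mathcal{M} = \{E_{1}, \ldots, E_{s}\}$ realising $\gamma(\mathcal{H})$ and set $W \coloneqq E_{1} \sqcup \cdots \sqcup E_{s}$; the union is disjoint because $\mathcal{M}$ is a matching, so $\md{W} = st$. Write $I = I_{\Delta}$, where $\Delta$ is the simplicial complex on $V(\mathcal{H})$ whose faces are the subsets containing no hyperedge. The crux is to understand the induced subcomplex $\Delta_{W}$: a subset $F \subseteq W$ fails to be a face of $\Delta_{W}$ exactly when it contains some hyperedge lying inside $W$, and by the \emph{induced}-matching hypothesis the only such hyperedges are $E_{1}, \ldots, E_{s}$. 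Hence the minimal non-faces of $\Delta_{W}$ are precisely $E_{1}, \ldots, E_{s}$; since these are pairwise disjoint, $\Delta_{W}$ is the join $\partial_{1} \ast \cdots \ast \partial_{s}$, where $\partial_{i}$ is the complex of all proper subsets of $E_{i}$ (the boundary of a $(t-1)$-simplex). Each $\partial_{i}$ is a triangulated $(t-2)$-sphere, so $\Delta_{W}$ is a triangulated $(s(t-1)-1)$-sphere, and in particular $\widetilde{H}_{s(t-1)-1}(\Delta_{W}; \mathbb{K}) \cong \mathbb{K}$.

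Next I would invoke Hochster's formula $\beta_{i, j}(S/I_{\Delta}) = \sum_{W' \subseteq V(\mathcal{H}),\ \md{W'} = j} \dim_{\mathbb{K}} \widetilde{H}_{j-i-1}(\Delta_{W'}; \mathbb{K})$, the point being that it is a sum of nonnegative terms, so one nonzero summand suffices. Taking $j = \md{W} = st$, the term for $W' = W$ contributes $\dim_{\mathbb{K}} \widetilde{H}_{st - i - 1}(\Delta_{W}; \mathbb{K})$ to $\beta_{i, st}(S/I)$, and this is nonzero exactly when $st - i - 1 = s(t-1) - 1$, i.e.\ when $i = s$. Hence $\beta_{s, st}(S/I) \ne 0$, and since $\reg(S/I) = \max\{j - i : \beta_{i,j}(S/I) \ne 0\}$, we get $\reg(S/I) \ge st - s = (t-1)s = (t-1)\gamma(\mathcal{H})$.

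The one substantive point is the identification of $\Delta_{W}$ with a join of boundaries of simplices; everything else (a join of $(t-2)$-spheres is a sphere, and the index bookkeeping in Hochster's formula) is routine. This identification is exactly what the definition of an induced matching is tailored to give: equivalently, it says that the image of $I$ under the substitution $x_{v} \mapsto 0$ for $v \notin W$ is the complete intersection $\langle x_{E_{1}}, \ldots, x_{E_{s}} \rangle \subset \mathbb{K}[x_{v} : v \in W]$. If one wishes to avoid simplicial complexes, the argument can instead be phrased as: this complete intersection of $s$ forms of degree $t$ has regularity $\sum_{i=1}^{s}(t-1) = (t-1)s$ by its Koszul resolution, and passing from $S/I$ to it by killing a subset of the variables does not increase regularity, which one verifies by induction on the number of killed variables via the exact sequence $0 \to (S/(I:x))(-1) \xrightarrow{\ \cdot x\ } S/I \to S/(I, x) \to 0$ together with the inequality $\reg(S/(I:x)) \le \reg(S/I)$.
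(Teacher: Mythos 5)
Your argument is correct, but note that the paper does not prove this statement at all: it is recalled as a known fact, cited as a special case of Morey--Villarreal and as Theorem~1.4 of H\`a--Woodroofe. What you have written is essentially the standard proof from that literature (the argument going back to Katzman in the graph case): exhibit the nonvanishing Betti number $\beta_{s,st}(S/I)$ with $s=\gamma(\mathcal{H})$ via Hochster's formula, using that the restriction $\Delta_W$ to the vertex set of an induced matching is a join of boundaries of simplices and hence a sphere of dimension $s(t-1)-1$. The identification of the minimal nonfaces of $\Delta_W$ with $E_1,\dots,E_s$ is exactly where the induced-matching hypothesis and the paper's convention that there is no containment among distinct hyperedges are used, and your index bookkeeping is right. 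One small caveat on your closing alternative: the inequality $\reg(S/(I:x))\le\reg(S/I)$ for monomial ideals is true but not a formal consequence of the displayed exact sequence; it is itself a (standard but nontrivial) fact that would need a citation, whereas the main Hochster-formula argument is self-contained and needs nothing beyond what you wrote.
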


Since $\conn_{t}(G)$ is the edge ideal of $\mathcal{H}(G, t)$ and generated by monomials of degree $t$, we immediately get (cf. \Crefrange{rem:linear-resolution-and-regularity}{rem:regularity-I-S-mod-I}) the following.

\begin{cor} \label{cor:linear-resolution-implies-t-gap-free}
    Let $G$ be a graph, and $t \ge 2$.
    \begin{equation*} 
        \conn_{t}(G) \text{ has a linear resolution} \Rightarrow \text{$G$ is $t$-gap-free}.
    \end{equation*}
\end{cor}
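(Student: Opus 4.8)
The plan is to deduce this as a direct corollary of the cited regularity bound $\reg(S/I) \ge (t-1)\gamma(\mathcal{H})$ applied to $\mathcal{H} = \mathcal{H}(G,t)$ and $I = \conn_t(G)$. First I would recall that, by construction, $\conn_t(G)$ is precisely the edge ideal of the hypergraph $\mathcal{H}(G,t)$, and every hyperedge of $\mathcal{H}(G,t)$ has cardinality exactly $t$ (these are the $t$-connected subsets of $V(G)$). Thus the hypothesis of the quoted theorem is satisfied, giving $\reg(S/\conn_t(G)) \ge (t-1)\gamma(\mathcal{H}(G,t))$, where $S = \mathbb{K}[G]$.

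Next I would translate the linear resolution hypothesis into a regularity statement. Since $\conn_t(G)$ is generated in degree $t$, \Cref{rem:linear-resolution-and-regularity} says that $\conn_t(G)$ having a linear resolution is equivalent to $\reg(\conn_t(G)) = t$, and \Cref{rem:regularity-I-S-mod-I} gives $\reg(S/\conn_t(G)) = \reg(\conn_t(G)) - 1 = t - 1$. (One should handle the trivial case where $\conn_t(G) = 0$, i.e.\ $G$ has no $t$-connected subset, separately: then $\mathcal{H}(G,t)$ has no hyperedges, so $\gamma = 0 \le 1$ and $G$ is vacuously $t$-gap-free.)

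Combining the two displays, assuming $\conn_t(G)$ has a linear resolution, we get $(t-1)\gamma(\mathcal{H}(G,t)) \le \reg(S/\conn_t(G)) = t-1$. Since $t \ge 2$, we have $t - 1 \ge 1 > 0$, so dividing yields $\gamma(\mathcal{H}(G,t)) \le 1$, which is exactly the definition of $G$ being $t$-gap-free. There is essentially no obstacle here — the only things to be careful about are the degenerate zero-ideal case and correctly chaining the two remarks relating $\reg(I)$, $\reg(S/I)$, and the linear resolution property; the substantive input (the regularity lower bound) is imported wholesale from \cite{HaWoodroofe}/\cite{MoreyVillarreal}.
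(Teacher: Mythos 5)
Your proposal is correct and is exactly the paper's argument: the paper derives this corollary immediately from the quoted regularity bound $\reg(S/I)\ge (t-1)\gamma(\mathcal{H})$ together with \Cref{rem:linear-resolution-and-regularity} and \Cref{rem:regularity-I-S-mod-I}, just as you do. Your additional care with the zero-ideal case and the division by $t-1>0$ is a harmless (and welcome) elaboration of details the paper leaves implicit.
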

In the next section, we will prove the converse for chordal graphs. 

\begin{prop} \label{prop:induced-subgraph-regularity-conn-t}
    Let $H$ be an induced subgraph of $G$. Then, $\reg(\conn_{t}(H)) \le \reg(\conn_{t}(G))$.
\end{prop}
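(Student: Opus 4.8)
The plan is to peel off the vertices of $V(G) \setminus V(H)$ one at a time, so that it suffices to treat a single deletion $H = G \setminus v$ and then iterate: at each stage the smaller graph is again an induced subgraph of the previous one, and after deleting all of $V(G) \setminus V(H)$ we land exactly on $H$, precisely because $H$ is induced. So fix $v \in V(G)$, put $S = \mathbb{K}[G]$, $I = \conn_{t}(G)$, and $x = x_{v}$.

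Next I would identify $\conn_{t}(G \setminus v)$ ring-theoretically. Under the quotient map $S \twoheadrightarrow S/(x) = \mathbb{K}[G \setminus v]$, a generator $x_{C}$ of $I$ maps to $0$ if $v \in C$ and to $x_{C}$ if $v \notin C$; and because $G \setminus v$ is an induced subgraph, the subsets $C \subset V(G) \setminus \{v\}$ that are $t$-connected in $G$ are exactly those that are $t$-connected in $G \setminus v$. Hence the image of $I$ in $\mathbb{K}[G \setminus v]$ is precisely $\conn_{t}(G \setminus v)$, that is, $S/(I + (x)) \cong \mathbb{K}[G \setminus v]/\conn_{t}(G \setminus v)$. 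Since this module is killed by $x$, its regularity over $S$ agrees with its regularity over $S/(x)$, and therefore $\reg(\conn_{t}(G \setminus v)) = \reg(S/(I + (x))) + 1$.

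Finally I would invoke the standard fact that specialising a monomial ideal by setting a variable to zero does not increase its regularity, i.e.\ $\reg(S/(I + (x))) \le \reg(S/I)$. One short route: the exact sequence $0 \to (S/(I : x))(-1) \xrightarrow{\ x\ } S/I \to S/(I + (x)) \to 0$ gives $\reg(S/(I+(x))) \le \max\{\reg(S/I),\, \reg(S/(I:x))\}$, combined with the (also standard) fact that colon by a variable does not raise the regularity of a monomial ideal. Putting these together, $\reg(\conn_{t}(G \setminus v)) = \reg(S/(I+(x))) + 1 \le \reg(S/I) + 1 = \reg(\conn_{t}(G))$, and iterating over the deleted vertices completes the proof.

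The argument is essentially bookkeeping; the genuine inputs are (i) the elementary but slightly fiddly identification $S/(I+(x)) \cong \mathbb{K}[G\setminus v]/\conn_{t}(G\setminus v)$ together with the base-change remark, and (ii) the monotonicity $\reg(S/(I:x)) \le \reg(S/I)$ for monomial ideals, which I would cite rather than reprove. I expect (ii) — locating a clean reference or supplying a self-contained inductive proof — to be the only point needing real care; everything else is formal. As an aside, the proposition is also immediate from Hochster's formula once one observes that $\conn_{t}(H)$ is the Stanley--Reisner ideal of the induced subcomplex $\Ind_{t-1}(G)[V(H)]$, but that route uses the simplicial-complex machinery the paper otherwise avoids.
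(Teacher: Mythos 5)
Your argument is correct, but it takes a different route from the paper. The paper's proof is a two-line reduction: it observes that, because $H$ is an induced subgraph, $\mathcal{H}(H,t)$ is an \emph{induced subhypergraph} of $\mathcal{H}(G,t)$ (a subset $C \subset V(H)$ is $t$-connected in $H$ iff it is $t$-connected in $G$), and then cites the monotonicity of regularity under passing to induced subhypergraphs (H\`a--Woodroofe, Lemma~2.5). You prove the same restriction statement by hand: peel off one vertex at a time, identify $S/(I + (x_v))$ with $\mathbb{K}[G\setminus v]/\conn_t(G\setminus v)$ (this identification uses exactly the same induced-subgraph observation as the paper), and then bound $\reg(S/(I+(x)))$ via the short exact sequence $0 \to (S/(I:x))(-1) \to S/I \to S/(I+(x)) \to 0$ together with $\reg(S/(I:x)) \le \reg(S/I)$. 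All the individual steps check out, including the grading shift, the inequality $\reg(C) \le \max\{\reg(A)-1, \reg(B)\}$ for the cokernel, and the base-change remark for modules killed by $x$. The one thing to be honest about is that your ingredient (ii) --- colon by a variable does not raise the regularity of a monomial ideal --- is true but is itself of essentially the same depth as the lemma the paper cites (for squarefree ideals it amounts to $\reg$ of a link being bounded by $\reg$ of the complex, usually proved via Takayama/Hochster-type formulas), so your route is self-contained only modulo a citation of comparable weight; if you do supply a reference for it, the proof is complete and arguably more transparent about where the algebra happens. Your closing aside --- that the whole proposition is immediate from Hochster's formula applied to the induced subcomplex $\Ind_{t-1}(G)[V(H)]$ --- is exactly the content of the lemma the paper invokes.
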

\begin{proof} 
    By definition, we have $\conn_{t}(H) = \edge(\mathcal{H}(H, t))$ and $\conn_{t}(G) = \edge(\mathcal{H}(G, t))$, where $\edge(\mathcal{H})$ denotes the edge ideal of a hypergraph $\mathcal{H}$. \newline
    By~\cite[Lemma 2.5]{HaWoodroofe}, it suffices to show that $\mathcal{H}(H, t)$ is an induced subhypergraph of $\mathcal{H}(G, t)$. This follows at once since if $C \subset V(H)$ is any subset, then $H[C] \cong G[C]$ as $H$ is an induced subgraph.
\end{proof}

\section{Linear quotients for chordal graphs} \label{sec:linear-quotients-for-chordal}

\begin{thm} \label{thm:chordal-LQ-LR-gap-equivalent}
    Let $G$ be a chordal graph, and $t \ge 2$. The following are equivalent:
    \begin{enumerate}[label=(\alph*)]
        \item $\conn_{t}(G)$ has linear quotients.
        \item $\conn_{t}(G)$ has a linear resolution.
        \item $G$ is $t$-gap-free.
    \end{enumerate}
\end{thm}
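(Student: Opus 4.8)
The proof runs through the cycle (a) $\Rightarrow$ (b) $\Rightarrow$ (c) $\Rightarrow$ (a). The implication (a) $\Rightarrow$ (b) is immediate, since $\conn_t(G)$ is generated in the single degree $t$ (see \Cref{subsec:linear-quotients}), and (b) $\Rightarrow$ (c) is exactly \Cref{cor:linear-resolution-implies-t-gap-free}. So the work is in (c) $\Rightarrow$ (a): a chordal $t$-gap-free graph has $\conn_t(G)$ with linear quotients. I would prove this by induction on $\md{V(G)}$. To start, one reduces to $G$ connected: if two distinct components each had at least $t$ vertices, pruning a spanning tree of each down to $t$ vertices would give two disjoint $t$-connected sets with no edge between them, contradicting $t$-gap-freeness via \Cref{prop:gamma-H-one-equivalent}; so at most one component $H$ has $\ge t$ vertices (if none does, $\conn_t(G) = 0$ and we are done), $\conn_t(G)$ has the same minimal monomial generators as $\conn_t(H)$ (a $t$-connected set lies in one component), and since having linear quotients depends only on those monomials (\Cref{rem:linear-quotients-independent-field}) we may replace $G$ by $H$. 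Thus assume $G$ is connected with $\md{V(G)} \ge t$.

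Since $G$ is chordal it has a simplicial vertex $v$; put $K \coloneqq N(v)$, a clique, and $G' \coloneqq G \setminus v$, again chordal and $t$-gap-free (the latter by the characterisation in \Cref{prop:gamma-H-one-equivalent}), with one fewer vertex. Split $\mingens(\conn_t(G)) = \mathcal{A} \sqcup \mathcal{B}$, where $\mathcal{A}$ consists of the $x_C$ with $v \notin C$ and $\mathcal{B}$ of those with $v \in C$; note $\mathcal{A} = \mingens(\conn_t(G'))$. A key observation: for $x_C \in \mathcal{B}$, the set $D \coloneqq C \setminus \{v\}$ is connected of size $t - 1$ and meets $K$. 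Indeed, $v$ has a neighbour in $C$ (as $\md{C} \ge 2$), giving $D \cap K \neq \emptyset$; and since $v$ is simplicial, the neighbours of $v$ in $C$ form a clique, so they cannot be distributed among two or more components of $G[D]$, forcing $G[D]$ to be connected.

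I would now order $\mingens(\conn_t(G))$ by taking all of $\mathcal{A}$ first, in an admissible order (which exists by the inductive hypothesis applied to $G'$), and then all of $\mathcal{B}$ in any order whatsoever. To verify admissibility I invoke the combinatorial criterion used in the proof of \Cref{prop:linear-quotients-combinatorial}: for an equigenerated squarefree monomial ideal it suffices that whenever $C' < C$ are among the generating sets, there is a generating set $C''$ with $C'' < C$, $\md{C'' \setminus C} = 1$, and $C'' \setminus C \subseteq C'$. When $C \in \mathcal{A}$ and $C' < C$, also $C' \in \mathcal{A}$, and the needed $C''$ comes from admissibility of the order on $\mathcal{A}$. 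The crux is the case $C \in \mathcal{B}$, which is settled by the following claim: writing $C = D \sqcup \{v\}$ with $D$ as above, for every $t$-connected $C' \neq C$ there is a vertex $b \in C' \setminus C$ adjacent to $D$; then $C'' \coloneqq D \cup \{b\}$ is $t$-connected, does not contain $v$, hence $C'' \in \mathcal{A}$ and so $C'' < C$ automatically, and $C'' \setminus C = \{b\} \subseteq C'$ — in particular no analysis of the internal order on $\mathcal{B}$ is ever needed.

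To produce $b$ one distinguishes three cases. If $v \notin C'$ and $C' \cap D \neq \emptyset$: then $C' \setminus D \neq \emptyset$ for cardinality reasons, and \Cref{lem:connected-disjoint-union} applied to $C' = (C' \cap D) \sqcup (C' \setminus D)$ yields an edge from $C' \setminus D = C' \setminus C$ into $D$. If $v \in C'$: by the same observation write $C' = D' \sqcup \{v\}$ with $D'$ connected of size $t-1$ meeting $K$, and $D' \neq D$; if $D' \cap D \neq \emptyset$ then \Cref{lem:connected-disjoint-union} applied to $D' = (D' \cap D) \sqcup (D' \setminus D)$ gives an edge from $D' \setminus D = C' \setminus C$ into $D$, while if $D' \cap D = \emptyset$ then any $k \in D \cap K$ and $k' \in D' \cap K$ are distinct vertices of the clique $K$, so $\{k,k'\}$ is an edge and $b = k' \in C' \setminus C$ works. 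Finally if $v \notin C'$ and $C' \cap D = \emptyset$, then $C \cap C' = \emptyset$, so $t$-gap-freeness (\Cref{prop:gamma-H-one-equivalent}) furnishes an edge $\{c,c'\}$ with $c \in C$ and $c' \in C'$; if $c \in D$ take $b = c'$, and if $c = v$ then $c' \in K$ is adjacent, through the clique $K$, to any chosen $k \in D \cap K$ (with $c' \neq k$ since $C' \cap D = \emptyset$), so again $b = c'$. The main difficulty, I expect, is precisely in arriving at this arrangement — choosing $v$ simplicial so that $D = C \setminus \{v\}$ stays connected and meets $K$, ordering $\mathcal{A}$ before $\mathcal{B}$, and noticing that every swap can then be resolved by deleting $v$ and landing back in $\mathcal{A}$; this is what makes chordality and $t$-gap-freeness (via \Cref{prop:gamma-H-one-equivalent}) cooperate, and it sidesteps having to treat directly the "link"-type ideal generated by the connected $(t-1)$-subsets meeting $K$.
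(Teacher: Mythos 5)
Your proposal is correct and follows essentially the same route as the paper: induct on the number of vertices, delete a simplicial vertex $v$, place the generators not involving $v$ first (admissible by induction) and those containing $v$ last in any order, and resolve each colon by replacing $v$ in $C$ with a vertex of $C' \setminus C$ adjacent to the (still connected) set $C \setminus \{v\}$, using the connectivity lemma, $t$-gap-freeness, and the clique $N(v)$ exactly as the paper does. The only cosmetic differences are your preliminary reduction to connected $G$ (which the paper skips as unnecessary) and organizing the edge-finding step as a three-way case split rather than the paper's two claims.
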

The above can be seen as a generalisation of Fr\"{o}berg's theorem for chordal graphs.
\begin{proof} 
    Only (c) $\Rightarrow$ (a) is to be shown, as we do now. 
    We prove this by induction on $\md{V(G)}$. 
    If $\md{V(G)} < t$, then $\conn_{t}(G)$ is the zero ideal, and the result is clear. 

    Assume now that $\md{V(G)} \ge t$. 
    Pick a vertex $v \in V(G)$ such that the neighbours of $v$ form a clique, i.e., the induced subgraph $G[N(v)]$ is complete; 
    the existence of such a vertex follows from the fact that $G$ is chordal, see~\cite{Rose}.
    Thus $G \setminus v$ is a chordal graph on fewer vertices, and the corresponding hypergraph continues to have induced matching number one. 
    By induction, there is an admissible order $<$ on the generators of $\conn_{t}(G \setminus v)$. 
    Now, note that 
    \begin{equation*} 
        \mingens(\conn_{t}(G)) = \mingens(\conn_{t}(G \setminus v)) \sqcup \{x_{C} : v \in C,\, C \text{ is $t$-connected}\}.
    \end{equation*}
    We now show that appending the monomials from the latter set to $<$ gives us an admissible order, proving the result. 
    (The order that we give to the latter set does not matter.)

    Let $C$ be a $t$-connected subset containing $v$, and $D$ a $t$-connected subset such that $D < C$. 
    We wish to show the existence of a subset $C' < C$ such that $x_{C'} : x_{C}$ is a variable that divides $x_{D}$. 

    \textbf{Claim 1.} The set $N(C) \cap D$ is nonempty, i.e., there is a vertex $w \in D \setminus C$ that is a neighbour of a vertex in $C$.
    
    \begin{proof} 
        If $C$ and $D$ are not disjoint, then this follows by applying \Cref{lem:connected-disjoint-union} to the decomposition of $D$ given by $(C \cap D) \sqcup (D \setminus C)$. 
        If $C$ and $D$ are disjoint, then this follows from \Cref{prop:gamma-H-one-equivalent}. 
    \end{proof}

    We now fix such a vertex $w \in N(C) \cap D$.

    \textbf{Claim 2.} There exists $u \in C \setminus \{v\}$ such that $\{u, w\}$ is an edge.

    \begin{proof} 
        By construction, $w$ is a neighbour of some $u \in C$. 
        If $u \neq v$, then we are done. 
        Assume now that $u = v$. 
        Because $\md{C} = t \ge 2$, we see that $v$ has a neighbour $u' \in C$. 
        By hypothesis, the neighbours of $v$ in $G$ form a complete graph and hence, $\{u', w\}$ is an edge. 
    \end{proof}

    Thus, we now have a vertex $w \in D \setminus C$ that is connected to a vertex $u \in C \setminus \{v\}$. 
    In particular, the set $C' \coloneqq (C \setminus \{v\}) \sqcup \{w\}$ has size $t$. 
    Moreover, note that because the neighbours of $v$ form a complete graph, the induced graph on $C \setminus \{v\}$ is connected and contains $u$. 
    Because $\{u, w\}$ is an edge, we see that $C'$ is connected as well. 
    Thus, we have constructed a $t$-connected set $C' < C$ such that $x_{C'} : x_{C}$ divides $x_{D}$, as desired.
\end{proof}

If $G$ is not chordal, then the equivalence in \Cref{thm:chordal-LQ-LR-gap-equivalent} does not hold. 
It fails even for cycles, as the next result shows.

\begin{thm} \label{thm:failure-for-cycles}
    Let $t \ge 2$. Then, $C_{2t + 1}$ is $t$-gap-free but $\conn_{t}(C_{2t + 1})$ does not have linear resolution. \newline
    If $t \ge 3$, then $C_{2t}$ is $t$-gap-free but $\conn_{t}(C_{2t})$ does not have linear resolution. \newline
    In particular, these ideals do not have linear quotients.
\end{thm}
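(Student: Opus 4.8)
The plan is to treat the $t$-gap-freeness and the failure of linearity separately.

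For the first: in a cycle a $t$-connected set is exactly a set of $t$ consecutive vertices (an arc). If two disjoint $t$-arcs had no edge between them, then reading around the cycle one would meet arc, gap, arc, gap, forcing $n\ge 2t+2$. Since $2t$ and $2t+1$ are both $<2t+2$, in $C_{2t}$ and in $C_{2t+1}$ any two disjoint $t$-arcs are joined by an edge, which by \Cref{prop:gamma-H-one-equivalent} is precisely $t$-gap-freeness. (This needs only $t\ge 2$; the hypothesis $t\ge 3$ in the second statement is used below.)

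For the second: $\conn_t(C_n)$ is the Stanley--Reisner ideal of $\Delta\coloneqq\Ind_{t-1}(C_n)$, whose minimal nonfaces are the $t$-arcs. By the Eagon--Reiner theorem, $\conn_t(C_n)$ has a linear resolution iff the Alexander dual $\Delta^{\vee}$ is Cohen--Macaulay. A set is a face of $\Delta^{\vee}$ iff it is disjoint from some $t$-arc, i.e.\ iff it lies inside some arc $A_i$ of $n-t$ consecutive vertices ($i\in\mathbb{Z}_n$); thus $\Delta^{\vee}=\bigcup_{i}\overline{A_i}$ is a union of $n$ simplices, with $\dim\Delta^{\vee}=n-t-1$ (so $t$ when $n=2t+1$, and $t-1$ when $n=2t$). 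I would then show $\widetilde H_1(\Delta^{\vee};\mathbb{K})\ne 0$: since a Cohen--Macaulay complex $\Gamma$ satisfies $\widetilde H_i(\Gamma;\mathbb{K})=0$ for $i<\dim\Gamma$ (Reisner's criterion applied to the empty face), and here $1<\dim\Delta^{\vee}$ exactly when $t\ge 2$ (for $n=2t+1$) or $t\ge 3$ (for $n=2t$), this shows $\Delta^{\vee}$ is not Cohen--Macaulay, hence $\conn_t(C_n)$ has no linear resolution. The ``in particular'' then follows, as linear quotients force a linear resolution for equigenerated ideals.

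To compute $\widetilde H_1(\Delta^{\vee};\mathbb{K})$ I would invoke the nerve lemma. Regard $|C_n|$ as the circle and cover it by the $n$ closed sub-arcs $|A_i|\subseteq|C_n|$. As $n\le 2t+1$, each $|A_i|$ occupies the fraction $(n-t-1)/n<\tfrac{1}{2}$ of the circle; consequently no two of these arcs together cover $|C_n|$, so every nonempty intersection of some of the $|A_i|$ is a single (contractible) arc, and the cover is good. Its nerve is exactly $\Delta^{\vee}$ --- a finite set of indices lies in an arc of $n-t$ consecutive vertices iff the corresponding $A_i$ share a vertex iff the corresponding $|A_i|$ meet --- so the nerve lemma yields $\Delta^{\vee}\simeq S^1$ and $\widetilde H_1(\Delta^{\vee};\mathbb{K})\cong\mathbb{K}$. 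The part I expect to be most technical is this nerve step: correctly identifying the faces of $\Delta^{\vee}$, verifying the fraction estimate $(n-t-1)/n<\tfrac{1}{2}$ that makes the cover good (this is where $n\le 2t+1$ is genuinely used), and citing a version of the nerve lemma valid for covers by subcomplexes of a finite complex. If one prefers to bypass the nerve lemma, the same $\widetilde H_1\ne 0$ can be obtained by constructing a simplicial map $\Delta^{\vee}\to S^1$ sending the cyclic edge-path $0\to 1\to\cdots\to n-1\to 0$ to a generator of $H_1(S^1;\mathbb{K})$; this map is well defined because every face of $\Delta^{\vee}$ sits inside an arc of fewer than $n/2$ edges, and its existence forces $\widetilde H_1(\Delta^{\vee};\mathbb{K})\ne 0$.
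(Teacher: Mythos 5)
Your argument is correct, but it takes a genuinely different route from the paper. The paper observes that for cycles $\conn_t(C_n)$ coincides with the $t$-path ideal and then simply quotes the regularity formula of Alilooee--Faridi, which gives $\reg(\conn_t(C_{2t}))=2t-2$ and $\reg(\conn_t(C_{2t+1}))=2t-1$, both exceeding $t$; the $t$-gap-freeness is dismissed as straightforward. You instead give a self-contained topological proof: identify $\conn_t(C_n)$ as $I_{\Delta}$ for $\Delta=\Ind_{t-1}(C_n)$, describe $\Delta^{\vee}$ as the union of the $n$ full simplices on the arcs of $n-t$ consecutive vertices, show via the nerve lemma (the arcs each cover less than half the circle precisely because $n\le 2t+1$, so the cover is good and the nerve is $\Delta^{\vee}$) that $\Delta^{\vee}\simeq S^1$, and then combine Reisner's criterion at the empty face with Eagon--Reiner; the inequality $1<\dim\Delta^{\vee}=n-t-1$ is exactly where $t\ge 2$ (odd case) and $t\ge 3$ (even case) enter, matching the hypotheses. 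All the steps check out, including the identification of the nerve with $\Delta^{\vee}$ (a set of indices has nonempty intersection of arcs iff it sits inside an arc of $n-t$ consecutive indices) and the purity/dimension count. The trade-off: the paper's proof is two lines but leans on an external computation and yields the exact (much larger) regularity; yours is longer but independent of the path-ideal literature, works uniformly over every field, and makes visible the geometric reason for failure, namely that the Alexander dual is a circle. Your explicit verification of $t$-gap-freeness (two disjoint $t$-arcs with no edge between them force $n\ge 2t+2$) is also a welcome elaboration of what the paper leaves to the reader.
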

\begin{proof} 
    In either case, the statement about being $t$-gap-free is straightforward to verify. We note that for cycles, our ideal $\conn_{t}$ coincides with the usual path ideal. By~\cite[Corollary 5.5]{AlilooeeFaridi}, we have
    \begin{align*} 
        \reg(\conn_{t}(C_{2t})) = 2t - 2, \\
        \reg(\conn_{t}(C_{2t + 1})) = 2t - 1.
    \end{align*}
    Both the quantities above are $> t$ under our hypotheses. Since our ideals are generated in degree $t$, this means that the resolutions are not linear (see \Cref{rem:linear-resolution-and-regularity}).
\end{proof}

\begin{prop} \label{prop:failure-for-induced-cycles}
    If $G$ contains an induced $n$-cycle, then $\conn_{t}(G)$ does not have linear resolution if $n > t + 2$.
\end{prop}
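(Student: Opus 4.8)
The plan is to combine the two monotonicity results already established --- \Cref{prop:induced-subgraph-regularity-conn-t} (regularity does not decrease when passing to the connected ideal of an induced subgraph) and the explicit regularity computation for cycles in \Cref{thm:failure-for-cycles} --- to deduce the non-linearity of $\conn_t(G)$ from the presence of an induced $n$-cycle. First I would let $H = G[C] \cong C_n$ be the induced $n$-cycle inside $G$. By \Cref{prop:induced-subgraph-regularity-conn-t}, we have $\reg(\conn_t(G)) \ge \reg(\conn_t(H)) = \reg(\conn_t(C_n))$. Since $\conn_t(G)$ is generated in degree $t$, it suffices (by \Cref{rem:linear-resolution-and-regularity}) to show $\reg(\conn_t(C_n)) > t$ whenever $n > t + 2$.

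For the cycle computation I would again invoke the fact noted in \Cref{thm:failure-for-cycles} that on a cycle the connected ideal $\conn_t(C_n)$ coincides with the path ideal $\edge_t(C_n)$, and then quote the regularity formula from~\cite[Corollary 5.5]{AlilooeeFaridi}. The relevant statement there gives $\reg(\conn_t(C_n))$ in terms of $n$ and $t$; the key inequality to extract is that this regularity is at least $\big\lceil \tfrac{n-t}{2}\big\rceil (t-1) + \text{(lower order correction)}$, or more simply that it exceeds $t$ precisely in the stated range $n > t+2$. The two displayed values $\reg(\conn_t(C_{2t})) = 2t-2$ and $\reg(\conn_t(C_{2t+1})) = 2t-1$ in \Cref{thm:failure-for-cycles} are the special cases $n = 2t$ and $n = 2t+1$; here one needs the general-$n$ version of the same formula, so the bookkeeping is slightly more involved but entirely routine.

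The one point requiring a little care --- and the main (mild) obstacle --- is matching the hypothesis $n > t+2$ against the exact form of the Alilooee--Faridi formula, which is typically stated by cases according to the residue of $n$ modulo $t$ (or modulo $t+1$), and verifying that in every such case the regularity strictly exceeds $t$ as soon as $n \ge t+3$, while for $n \le t+2$ one would instead get regularity equal to $t$ (consistent with, e.g., $C_{t+1}$ and $C_{t+2}$ being chordal-like small cycles whose connected ideals can have linear resolution). I would therefore organise the proof as: reduce to the cycle case via \Cref{prop:induced-subgraph-regularity-conn-t}; identify $\conn_t(C_n) = \edge_t(C_n)$; cite~\cite[Corollary 5.5]{AlilooeeFaridi} for the value of $\reg$; and finish with the elementary arithmetic check that this value is $> t$ exactly when $n > t+2$. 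Since having a linear resolution is equivalent to $\reg = t$ for an ideal generated in degree $t$, this completes the argument.
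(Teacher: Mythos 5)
Your proposal is correct and follows essentially the same route as the paper: reduce to the cycle case via \Cref{prop:induced-subgraph-regularity-conn-t}, identify $\conn_t(C_n)$ with the path ideal, and invoke~\cite[Corollary 5.5]{AlilooeeFaridi}. The only step you leave as ``routine'' is the arithmetic, which the paper carries out by writing $n = p(t+1) + d$ with $0 \le d \le t$ and observing that $n > t+2$ forces $p \ge 1$ and rules out $(p,d) \in \{(1,0),(1,1)\}$, the only cases where the formula could give $\reg(S/\conn_t(C_n)) = t-1$.
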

\begin{proof} 
    By \Cref{prop:induced-subgraph-regularity-conn-t}, it suffices to show that $\reg(S/\conn_{t}(C_{n})) > t - 1$, where $S \coloneqq \mathbb{K}[G]$. Using the division algorithm, write
    \begin{equation*} 
        n = p(t + 1) + d
    \end{equation*}
    for some $p \ge 0$ and $0 \le d \le t$. Since $n > t + 2$, we get that $p \ge 1$. Moreover, if $p = 1$, then we must have $d > 1$. By~\cite[Corollary 5.5]{AlilooeeFaridi}, we know that
    \begin{equation*} 
        \reg(R/\conn_{t}(C_{n})) = 
        \begin{cases}
            (t - 1)p + d - 1 & \text{if } d \neq 0, \\
            (t - 1)p & \text{if } d = 0.
        \end{cases}
    \end{equation*}
    Since $p \ge 1$, the above can be equal to $t - 1$ only if $p = 1$ and $d \in \{0, 1\}$. We have already ruled out this possibility.
\end{proof}

\section{Linear quotients for gap-free and \texorpdfstring{$t$}{t}-claw-free graphs} \label{sec:linear-quotients-gap-free-t-claw-free}

\begin{defn}
    Let $t \ge 3$. A graph $G$ is called \deff{$t$-claw-free} if $G$ contains no induced subgraph isomorphic to $K_{1, t}$.
\end{defn} 
Note that the usual notion of claw-free coincides with $3$-claw-free. Moreover, we have
\begin{center}
    $3$-claw-free $\Rightarrow$ $4$-claw-free $\Rightarrow$ $5$-claw-free $\Rightarrow$ $\cdots$.
\end{center}

\begin{thm} \label{thm:gap-free-t-claw-free-LQ}
    Let $t \ge 3$ be an integer. Suppose $G$ is a gap-free and $t$-claw-free graph. Then, $\conn_{t}(G)$ has linear quotients. In particular, $\conn_{t}(G)$ has a linear resolution.
\end{thm}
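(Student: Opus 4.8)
The plan is to argue by induction on $\md{V(G)}$, following the skeleton of the proof of \Cref{thm:chordal-LQ-LR-gap-equivalent}. Since a $t$-connected set contains no isolated vertex (as $t \ge 2$), we may delete all isolated vertices and assume $G$ is connected; the base case $\md{V(G)} < t$ is trivial. For the inductive step, pick a non-cut vertex $v$ of $G$ (one exists by \Cref{lem:at-least-two-non-cut-vertices}), chosen to be a leaf if $G$ has one. Both gap-freeness and $t$-claw-freeness pass to $G \setminus v$, which stays connected, so by induction there is an admissible order $<$ on $\mingens(\conn_t(G \setminus v))$. Fix a linear order on $V(G)$ with $v$ least and extend $<$ to $\mingens(\conn_t(G)) = \mingens(\conn_t(G \setminus v)) \sqcup \{x_C : v \in C,\ C \text{ $t$-connected}\}$ by placing every $v$-free generator before every $v$-containing one, ordering the latter lexicographically. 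It then suffices to check admissibility at a $v$-containing generator $x_C$: given a $t$-connected $D < C$, we must produce $C' = (C \setminus \{a\}) \cup \{w\}$ with $a \in C$, $w \in D \setminus C$, $G[C']$ connected, and $C' < C$; then $x_{C'} : x_C = x_w$ divides $x_D$, as required.

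Two ingredients are essentially free. First, exactly as in Claim~1 of the chordal proof, $N(C) \cap D \ne \emptyset$: if $C \cap D \ne \emptyset$ apply \Cref{lem:connected-disjoint-union} to $D = (C \cap D) \sqcup (D \setminus C)$, and if $C \cap D = \emptyset$ apply \Cref{prop:gamma-H-one-equivalent}, using that gap-free $\Rightarrow$ $t$-gap-free. Second, for any $w \in N(C) \cap D$ there is $a \in C$ with $G[(C \setminus \{a\}) \cup \{w\}]$ connected: by \Cref{lem:at-least-two-non-cut-vertices} there are at least two vertices $b$ of $C$ with $G[C \setminus \{b\}]$ connected, and one can pick such an $a$ so that $w$ retains a neighbour in $C \setminus \{a\}$ (any such $b$ works if $\md{N(w) \cap C} \ge 2$; if $N(w) \cap C$ is a singleton, take $a$ different from it). If we may take $a = v$ here, then $C' = (C \setminus \{v\}) \cup \{w\}$ is $v$-free, hence precedes $C$, and we are done. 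This is possible whenever $v$ is a non-cut vertex of $G[C]$ and $v \notin N(w)$ — in particular when $v$ is a leaf of $G$, since then $v$ is a leaf, hence a non-cut vertex, of $G[C]$, and its unique $G$-neighbour lies in $C$ and so cannot be $w \in D \setminus C$. Thus the case in which $G$ has a leaf needs nothing beyond gap-freeness.

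The real work is the case in which $G$ is leafless (so $v$ was chosen merely non-cut) and $v$ is a cut vertex of $G[C]$. By \Cref{obs:gap-free-isolated-vertices}, $G[C \setminus \{v\}]$ is then a disjoint union of leaves of $G[C]$ attached to $v$ together with one further connected piece; taking $v$ with two of those leaves, or with a leaf and a neighbour of $v$ in the connected piece, yields an induced $K_{1,j}$, so $t$-claw-freeness bounds the number of such leaves and precludes having many leaves alongside a connected piece. This leaves only a short list of shapes for $G[C]$ near $v$, and in each we instead delete a \emph{leaf} $a$ of $G[C]$ (automatically a non-cut vertex) and attach $w$; the resulting $C'$ now lies in the $v$-block, and $C' < C$ lexicographically forces $a$ to be an element of $C \setminus \{v\}$ exceeding $w$ in the numbering. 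Reconciling these demands — a non-cut leaf of $G[C]$, larger than $w$, whose removal still leaves $w$ attached to the rest — will require choosing $w$ within $N(C) \cap D$ with care (e.g. so that it attaches to a prescribed component, via \Cref{cor:edge-from-C-to-outside}), and possibly refining the choice of $v$ or of the vertex numbering. I expect this reconciliation, together with the casework over the few degenerate shapes of $G[C]$, to be the main obstacle; everything else is a routine transcription of the chordal argument.
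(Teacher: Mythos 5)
Your proposal sets up the induction correctly and the easy observations (Claim~1 of the chordal proof transfers; gap-free implies $t$-gap-free; the case where the deleted vertex is a leaf of $G$) are fine, but the argument has a genuine gap exactly where you flag it: the ordering of the $v$-containing generators. You propose plain lexicographic order and then concede that making $C' = (C\setminus\{a\})\cup\{w\}$ precede $C$ ``will require choosing $w$ \ldots with care \ldots and possibly refining the choice of $v$ or of the vertex numbering'' and that you ``expect this reconciliation \ldots to be the main obstacle.'' That obstacle is the entire content of the theorem, and the lex order does not survive it: for instance when $G[C]\cong K_{1,t-1}$ centred at $v$, connectivity forces you to swap out a \emph{leaf} of $G[C]$, and lex order demands that leaf be numbered larger than the incoming vertex $w$ --- which fails whenever $w$ happens to be larger than every leaf of $C$, with no freedom left to fix it. The paper resolves this not by a lex order on all $v$-containing sets but by stratifying them: $\mathcal{C}_k$ collects the $C$ with exactly $k$ leaves of $G[C]$ adjacent to the deleted vertex, the blocks are ordered $\mathcal{C}_{-1}<\mathcal{C}_0<\cdots<\mathcal{C}_{t-1}$, and only sets sharing the same ``branch'' $B(C)=C\setminus(\{a\}\cup L(C))$ are compared via \Cref{prop:linear-quotients-combinatorial} applied to their leaf-sets. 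Dropping into a lower stratum (fewer leaves) is always a legal move, which is what makes the swaps work; gap-freeness (via \Cref{cor:edge-from-C-to-outside}) is what forces any problematic predecessor $C'$ to contain $a$ and to have the \emph{same} branch, reducing to the combinatorial order on leaf-sets.

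A second, related problem is your use of $t$-claw-freeness. You assert it ``bounds the number of such leaves and precludes having many leaves alongside a connected piece'' in $G[C]$ --- but since $\md{C}=t$, a star inside $G[C]$ has at most $t-1$ arms and so can never be an induced $K_{1,t}$; the hypothesis imposes \emph{no} constraint on the shape of $G[C]$ itself, so your ``short list of shapes'' is not actually shortened. The paper uses $t$-claw-freeness exactly once, in the extremal case $G[C]\cong K_{1,t-1}$: applied to the $(t+1)$-vertex set $C\sqcup\{w\}$ for an external $w$ adjacent to the centre, it forces an edge from $w$ to a leaf, which is what permits the leaf-swap. Without locating this use of the hypothesis and replacing the lex order by the leaf-count stratification, the proof does not close.
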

\begin{proof} 
    Let $t$ be as given. We prove the statement by induction on number of vertices of $G$. If $\md{V(G)} < t$, the statement is clear for then $\conn_{t}(G) = \langle 0 \rangle$. 

    Let $G$ be a graph with $\md{V(G)} \ge t$.\footnote{Note that we do not assume $G$ to be connected. So it may be still possible that $\conn_{t}(G)$ is the zero ideal.} Pick any vertex $a \in V(G)$. Then, $G \setminus a$ is again gap-free and $t$-claw-free and hence, $\conn_{t}(G \setminus a)$ has linear quotients. 

    As in the proof of \Cref{thm:chordal-LQ-LR-gap-equivalent}, it suffices to specify an appropriate order on the $t$-connected subsets $C$ containing $a$. We set up some notations first. 

    Let $C$ be a $t$-connected set containing $a$. By $L(C)$ we denote the set of leaves of $G[C]$ that are neighbours of $a$. (Equivalently, these are the isolated vertices of $C \setminus \{a\}$; see \Cref{obs:gap-free-isolated-vertices}.) We set $B(C) \coloneqq C \setminus \left(\{a\} \cup L(C)\right)$, calling this the \emph{branch} of $C$ (with respect to $a$). By \Cref{obs:gap-free-isolated-vertices}, we know that either $\md{B(C)} = 0$ or $\md{B(C)} \ge 2$.

    Let $\mathcal{C}$ denote the set of all $t$-connected subsets that contain $a$. We first partition $\mathcal{C}$ into $\mathcal{C}_{0}, \ldots, \mathcal{C}_{t - 1}$ as
    \begin{equation*} 
    \mathcal{C}_{k} \coloneqq \{C \in \mathcal{C} : \md{L(C)} = k\}.
    \end{equation*}
    We set $\mathcal{C}_{-1} \coloneqq \mingens(\conn_{t}(G \setminus a))$.

    Any arbitrary ordering can be put on $\mathcal{C}_{0}$. For $k \ge 1$, we order the sets based on their branches. For each $B \subset V(G)$ that is of the form $B(C)$ for some $C \in \mathcal{C}_{k}$, we fix one order $<_{B}$ on the $k$-element subsets of $N(a) \setminus B$ as given by \Cref{prop:linear-quotients-combinatorial}. If $C, C' \in \mathcal{C}_{k}$ are distinct such that $B = B(C) = B(C')$, then $L(C) \neq L(C')$. We set $C < C'$ if $L(C) <_{B} L(C')$. (We only require such an order among the sets having the same branch. The order among the different branches is immaterial as the proof will show.)  \newline
    Finally, we set
    \begin{equation*} 
        \mingens(\conn_{t}(G \setminus a)) = \mathcal{C}_{-1} < \mathcal{C}_{0} < \cdots < \mathcal{C}_{t - 1}
    \end{equation*}
    
    This defines the ordering. Given $C \in \mathcal{C}$, we now check that $J \coloneqq \langle x_{C'} : C' < C \rangle : \langle x_{C} \rangle$ is generated by variables. We break this into three cases, depending on $k \coloneqq \md{L(C)}$. In each case, we will show that if $C \in C_{k}$ and $C' < C$, then there exists a $C'' \in \mathcal{C_{\ell}}$ with $\ell < k$ having the property that $C'' \setminus C = \{v\}$ for some $v \in C'$. In particular, this tells us that $C'' < C$ and that the variable $x_{v}$ divides the colon $x_{C'} : x_{C}$.

    \textbf{Case 1}. $k = 0$. In this case, note that $C \setminus \{a\}$ continues to remain connected. The same is true for any $C' < C$, regardless of whether $a \in C'$ or not. Thus, by \Cref{cor:edge-from-C-to-outside} applied to $C \setminus \{a\}$ and $C' \setminus \{a\}$, there exists $b \in C \setminus \{a\}$ and $v \in C' \setminus C$ such that $\{b, v\}$ is an edge. \newline
    But then $C'' \coloneqq (C \setminus \{a\}) \sqcup \{v\}$ is a $t$-connected set not containing $a$. Since $a \notin C''$, we see that $C''$ necessarily comes before $C$. Thus, we get the variable $x_{v}$ using the colon $x_{C''} : x_{C}$, which divides $x_{C'} : x_{C}$.

    \textbf{Case 2.} $1 \le k < t - 1$. Equivalently, $\md{B(C)} \neq 0$. As noted above, this implies that $\md{B(C)} \ge 2$. (In turn, this means that the case $k = t - 2$ is nonexistent.) In particular, $B(C)$ contains an edge.

    \textbf{Claim 1.} If $v \in V(G) \setminus C$ is a neighbour of some $w \in C \setminus \{a\}$, then the variable $x_{v}$ is in the colon $J$.
    \begin{proof} 
        If $w \in B(C)$, then pick any $\ell \in L(C)$ and consider $C'' \coloneqq (C \setminus \{\ell\}) \sqcup \{v\}$. The subset $C''$ is $t$-connected and has $\md{L(C'')} < \md{L(C)}$, showing $C'' < C$. Thus, $x_{v} \in J$.

        If $w \notin B(C)$, then $w \in L(C)$. Let $\{b, c\} \subset B(C)$ be an edge. Note that $\{v, w\}$ are $\{b, c\}$ are disjoint. Since $G$ is gap-free, there is an edge between those sets. Necessarily, this edge cannot involve $w$ (since the only neighbour of $w$ in $C$ is $a$). Thus, $v$ is a neighbour of some vertex in $B(C)$, and we are in the previous case.
    \end{proof}

    Armed with the claim, let us assume that $C' < C$ is a $t$-connected subset. If $C' \setminus C$ contains a neighbour of some vertex in $C \setminus \{a\}$, then we are done by Claim 1. Suppose that this is not the case.

    \textbf{Claim 2.} $a \in C'$. 
    \begin{proof} 
        By \Cref{cor:edge-from-C-to-outside}, there exists an edge connecting $B(C)$ to some $v \in C' \setminus B(C)$. By assumption, we must have $v \in C$. Thus, $v \in L(C) \sqcup \{a\}$. Being leaves, the elements of $L(C)$ cannot be neighbours to any vertex in $B(C)$. Thus, $a = v \in C'$.
    \end{proof}

    Thus, $C' \in \mathcal{C}$. Since $C' < C$, we must have $\md{L(C')} \le \md{L(C)}$. Consequently, $\md{B(C')} \ge \md{B(C)} \ge 2$.

    \textbf{Claim 3.} $B(C) = B(C')$.
    \begin{proof} 
        If $B(C') \not\subset B(C)$, then \Cref{cor:edge-from-C-to-outside} would give us an edge from some $b \in B(C)$ to some $v \in B(C') \setminus B(C)$. As in the proof of the previous claim, we get that $v = a$. But this is a contradiction since $a \notin B(C')$. Thus, $B(C') \subset B(C)$. Checking cardinalities, we conclude equality.
    \end{proof}

    Now, since $B(C) = B(C') \eqqcolon B$ and $C' < C$, we must have $L(C') <_{B} L(C)$. By the definition of $<_{B}$, there exists a $k$-element subset $L'' \subset N(a) \setminus B$ such that $L '' < L(C)$ and $L'' \setminus L(C) = \{\ell\} \subset L(C')$. Consider the set
    \begin{equation*} 
        C'' = B \sqcup \{a\} \sqcup L''.
    \end{equation*}
    $C''$ is $t$-connected since $B \cup \{a\}$ is connected and each element of $L''$ is a neighbour of $a$. Note that $L(C'') \subset L''$. If this containment is proper, then $C'' < C$ since then $\md{L(C'')} < k$. If $L(C'') = L''$, then $B(C'') = B$. By definition of the order among subsets having the same $B$, we again get $C'' < C$. \newline
    Now, the colon $x_{C''} : x_{C}$ gives us the variable $x_{\ell}$, which divides the colon $x_{C'} : x_{C}$.
    
    \textbf{Case 3.} $k = t - 1$, i.e., $G[C]$ is isomorphic to $K_{1, t - 1}$, with $a$ as the center of the claw. Let $C'$ be a $t$-connected subset. By \Cref{cor:edge-from-C-to-outside}, there is an edge $\{v, w\}$ with $v \in C' \setminus C$ and $w \in C$. If $w = \ell \in L(C)$, then we are done by considering $C'' = (C \setminus \{\ell'\}) \sqcup \{v\}$, where $\ell' \in L(C)$ is a leaf different from $\ell$. \newline
    Thus, we may assume $w = a$. Now, consider the induced subgraph $G[C \sqcup \{v\}]$. This has vertex set $V' \coloneqq C \sqcup \{v\}$ and $a \in V'$ is a neighbour to all the $t$ elements of $V' \setminus \{a\}$. Since $G$ is $t$-claw-free, there must be an edge within $L(C) \sqcup \{v\}$. But there cannot be an edge between two elements of $L(C)$. Thus, $v$ is connected to some leaf $\ell \in L(C)$ and we are back in the previous case.
\end{proof}

\begin{cor} \label{cor:gap-free-claw-free-small-path-ideal-linear-quotients}
    If $G$ is a gap-free and claw-free graph, then the $t$-path ideal $\edge_{t}(G)$ has linear quotients for $t \in \{3, 4, 5\}$. In particular, the ideal has a linear resolution.
\end{cor}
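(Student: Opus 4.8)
The plan is to reduce the statement to \Cref{thm:gap-free-t-claw-free-LQ} by showing that for a claw-free graph $G$ and for $t \in \{3, 4, 5\}$, the $t$-path ideal $\edge_{t}(G)$ coincides with the $t$-connected ideal $\conn_{t}(G)$. Once that identification is in place, we simply observe that $G$ being gap-free and claw-free means, in particular, $G$ is gap-free and $t$-claw-free (since claw-free $=$ $3$-claw-free $\Rightarrow$ $t$-claw-free for $t \ge 3$, as recorded in the chain of implications just before \Cref{thm:gap-free-t-claw-free-LQ}), so $\conn_{t}(G)$ has linear quotients, hence so does $\edge_{t}(G)$; the ``in particular'' about linear resolutions then follows from the discussion in the introduction, since the ideal is equigenerated in degree $t$.

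So the real content is the combinatorial claim: if $G$ is claw-free and $t \le 5$, then every $t$-connected subset $C \subset V(G)$ supports a path on $t$ vertices, i.e., $G[C]$ has a Hamiltonian path. (The reverse inclusion $\edge_{t}(G) \subset \conn_{t}(G)$ is automatic, as noted in the paper right after \Cref{defn:t-connected-ideal}.) This is presumably the content of \Cref{lem:claw-free-path-and-connected-ideal-coincide}, which is referenced in the excerpt but whose statement I would invoke directly. Absent that, I would argue by hand: for $t = 3$ any connected graph on $3$ vertices is a path; for $t = 4$, a connected claw-free graph on $4$ vertices is $P_{4}$, $C_{4}$, the paw, the diamond, or $K_{4}$ — all of which have Hamiltonian paths, and $K_{1,3}$ is excluded by claw-freeness; for $t = 5$, one checks that among connected graphs on $5$ vertices, the only ones without a Hamiltonian path that one must worry about are those containing an induced claw (e.g. $K_{1,4}$, or $K_{1,3}$ with a pendant edge, etc.), which are precisely ruled out by claw-freeness. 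This is a finite check and I would either cite the earlier lemma or dispatch it by a short case analysis on the structure of a connected claw-free graph on $\le 5$ vertices.

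The main obstacle — and the reason the statement is restricted to $t \le 5$ — is that the equality $\edge_{t}(G) = \conn_{t}(G)$ genuinely fails for larger $t$ even under claw-freeness: there are claw-free graphs on $6$ vertices (for instance, two triangles sharing a vertex, which is claw-free) that are connected but have no Hamiltonian path, so the corresponding degree-$6$ monomial lies in $\conn_{6}$ but not in $\edge_{6}$. Thus the argument is essentially sharp, and the only delicate point is making the finite case analysis for $t = 5$ airtight (or, more cleanly, simply appealing to \Cref{lem:claw-free-path-and-connected-ideal-coincide}, which the paper has already set up for exactly this purpose).
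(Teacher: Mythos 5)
Your proposal is correct and takes essentially the same route as the paper: reduce to \Cref{thm:gap-free-t-claw-free-LQ} via the identity $\edge_{t}(G) = \conn_{t}(G)$ for claw-free $G$ and $t \in \{3,4,5\}$, which is exactly \Cref{lem:claw-free-path-and-connected-ideal-coincide} (the paper proves it by classifying spanning trees on $t \le 5$ vertices rather than all connected claw-free graphs, but the content is the same). One factual slip in your closing sharpness remark: two triangles sharing a vertex is the bowtie, which has $5$ vertices and does admit a Hamiltonian path; the genuine obstruction at $t = 6$ is the net graph (a triangle with a pendant vertex at each corner), as noted in the remark following the corollary.
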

The above ideals having linear resolutions was first proven in~\cite[Theorem 1.1]{Banerjee}.
\begin{proof} 
    Let $G$ and $t$ be as in the statement. Then, $G$ is $t$-claw-free. Thus, $\conn_{t}(G)$ has linear quotients by \Cref{thm:gap-free-t-claw-free-LQ}. The result now follows from \Cref{lem:claw-free-path-and-connected-ideal-coincide}.
\end{proof}

\begin{lem} \label{lem:claw-free-path-and-connected-ideal-coincide}
    If $G$ is a claw-free graph, then $\edge_{t}(G) = \conn_{t}(G)$ for $t \in \{3, 4, 5\}$. That is, the $t$-path ideal and $t$-connected ideal coincide.
\end{lem}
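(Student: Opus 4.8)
The plan is to prove the set equality $\mingens(\edge_t(G)) = \mingens(\conn_t(G))$ by showing that every $t$-connected subset $C \subset V(G)$ induces a path $G[C] \cong P_{t-1}$, given that $G$ is claw-free and $t \le 5$. The inclusion $\edge_t(G) \subseteq \conn_t(G)$ is trivial since a path on $t$ vertices is in particular connected. For the reverse inclusion, I would fix a $t$-connected set $C$ and study the connected graph $H \coloneqq G[C]$ on $t$ vertices; the goal is to produce a Hamiltonian path in $H$, i.e., an ordering $v_1, \ldots, v_t$ of $C$ with $\{v_i, v_{i+1}\} \in E(H)$, which exhibits $x_C$ as a generator of $\edge_t(G)$.

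The key combinatorial input is that $H$ is claw-free (being an induced subgraph of $G$, any induced $K_{1,3}$ in $H$ would be one in $G$). So I would invoke the classical fact that a connected claw-free graph on few vertices has a Hamiltonian path — more robustly, I would just do the casework directly on $t \in \{3, 4, 5\}$. For $t = 3$: a connected graph on $3$ vertices is either $P_2$ or $K_3$, both of which are paths or contain spanning paths. For $t = 4$: a connected claw-free graph on $4$ vertices — enumerate the connected graphs on $4$ vertices ($P_3$, $K_{1,3}$, $C_4$, the "paw", the "diamond" $K_4 - e$, $K_4$), discard $K_{1,3}$ as it is a claw, and check that each survivor has a spanning path. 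For $t = 5$: a connected claw-free graph on $5$ vertices; here one should argue that such $H$ is Hamiltonian-traceable. A clean way is: take a spanning tree $T$ of $H$; if $T$ is a path we are done; otherwise $T$ has a vertex of degree $\ge 3$, and using claw-freeness of $H$ together with a counting/degree argument one upgrades $T$ to a spanning path. Alternatively, cite that every connected claw-free graph on at most $5$ vertices has a Hamiltonian path (this fails at $6$ vertices, e.g., the graph obtained from two triangles sharing a vertex, which explains the bound $t \le 5$).

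The main obstacle is the $t = 5$ case: unlike $t = 3, 4$, it is not a one-line enumeration, and one must genuinely use claw-freeness rather than just connectedness (the graph $K_{1,4}$ on $5$ vertices is connected with no spanning path, so claw-freeness is essential). I would handle it by a short structural argument: let $P$ be a longest path in $H$; if $P$ is not spanning, it has length $\le 3$, so $H$ has $\ge 1$ vertex $z$ off $P$; connectedness forces $z$ to be adjacent to some vertex $p$ of $P$, and then examining the neighbours of $p$ among $\{z\} \cup (P \setminus p)$ and invoking claw-freeness of $H$ forces an edge that extends or reroutes $P$ into a longer path, contradicting maximality. This also makes transparent why the statement is sharp at $t = 6$. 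Finally, I would remark that this lemma is what powers \Cref{cor:gap-free-claw-free-small-path-ideal-linear-quotients}, since it lets us transfer the linear-quotients conclusion for $\conn_t$ to $\edge_t$ verbatim.
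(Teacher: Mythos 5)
Your proposal is correct and follows the same overall strategy as the paper: reduce to showing that for $t \le 5$, every connected claw-free graph on $t$ vertices has a Hamiltonian path. The difference is in how the casework is organised. The paper passes to a spanning tree of $G[C]$ and enumerates the trees on $t$ vertices (one, two, and three trees for $t = 3, 4, 5$ respectively); for each non-path tree, claw-freeness forces an extra edge between two leaves sharing a neighbour, which completes the vertex set to a path. You instead work with the full induced subgraph $H = G[C]$, enumerating connected graphs for $t = 3, 4$ and running a longest-path exchange argument for $t = 5$. Both routes are valid; the spanning-tree reduction keeps the enumeration minimal, while your longest-path argument for $t = 5$ is more robust but needs the rerouting case (where the two path-neighbours of the attachment vertex are adjacent) spelled out carefully, since the general claim ``connected claw-free implies traceable'' is false and the bound on $\md{V(H)}$ must genuinely enter.

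One concrete error in your side remark: the graph obtained from two triangles sharing a vertex (the bowtie) has five vertices, not six, and it \emph{does} have a Hamiltonian path (traverse one triangle's two outer vertices, pass through the shared vertex, then the other triangle's two outer vertices). The correct six-vertex obstruction is the net graph --- a triangle with a pendant vertex attached to each of its three vertices --- which is connected and claw-free but has three leaves, hence no Hamiltonian path; this is exactly the example the paper gives in the remark following \Cref{cor:gap-free-claw-free-small-path-ideal-linear-quotients}.
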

\begin{proof} 
    The inclusion $\edge_{t}(G) \subset \conn_{t}(G)$ always holds since paths are connected. We prove the reverse inclusion for $t \in \{3, 4, 5\}$. 
    Assume that $G$ is a claw-free graph. 
    It suffices to prove that whenever $C \subset V(G)$ is $t$-connected, then $C$ contains a path on $t$ vertices. 
    Since each connected graph possesses a spanning tree, it suffices to prove the following: 
    if $T$ is a (possibly non-induced) subgraph of $G$ that is a tree on $t$ vertices, then the vertices of $T$ form a path in $G$ in some order. Note that this path need not be a path in $T$.

    Case $t = 3$: There is exactly one tree on $3$ vertices, namely the $3$-path. Thus, we are done. 

    Case $t = 4$: There are exactly two trees on $4$ vertices, drawn below.

    \begin{center}
        \fourpathsymb \qquad \clawsymb{4}
    \end{center}

    The first graph is a $4$-path and we are done. 
    For the second graph, note that since $G$ is claw-free, there is an additional edge present in $G$ that connects two leaves in the above tree. 
    Thus, $G$ contains the following (possibly non-induced) subgraph.

    \begin{center}
        \clawcompletedsymb{4}
    \end{center}

    The above contains a $4$-path, as indicated by the red edges.

    Case $t = 5$: There are exactly three trees on $5$ vertices, drawn below.
    \begin{center}
        \fivepathsymb \qquad \fivetreenotpathsymb \qquad \clawsymb{5}
    \end{center}

    Arguing as before, we see that $G$ being claw-free implies the existence of enough extra edges that we get a path of length $5$.
\end{proof}

\begin{rem}
    \Cref{lem:claw-free-path-and-connected-ideal-coincide} cannot be extended to $t = 6$. Indeed, the following graph $G$---called the \deff{net graph}---is gap-free and claw-free but $\edge_{6}(G) = \langle 0 \rangle \neq \langle x_{V(G)} \rangle = \conn_{6}(G)$. 

    \begin{center}
        \sunletsymb{3}
    \end{center}

    In fact, one can check that the above graph is the sole hindrance for $t = 6$. More precisely, if $G$ is a claw-free graph that contains no induced subgraph isomorphic to the net graph, then $\conn_{6}(G) = \edge_{6}(G)$.

    However, the conclusion of \Cref{cor:gap-free-claw-free-small-path-ideal-linear-quotients} is still true for $t = 6$ by~\cite[Theorem 1.1]{Banerjee}, namely that $\edge_{6}(G)$ has a linear resolution if $G$ is a gap-free and claw-free graph. 
\end{rem}

\section{Further questions} \label{sec:further-questions}

\begin{question}
    Can the ``$t$-claw-free graph'' hypothesis in \Cref{thm:gap-free-t-claw-free-LQ} be dropped? In other words, if $G$ is gap-free, then does $\conn_{t}(G)$ have linear quotients for all $t \ge 3$?
\end{question}
Note that ``gap-free'' cannot be dropped, as witnessed by \Cref{thm:failure-for-cycles}. Moreover, the above is not true for $t = 2$. Indeed, one can take any gap-free graph that is not co-chordal; for example, $C_{5}$. 

\begin{question}
    For $t \ge 3$, what are all the graphs $G$ for which $\conn_{t}(G)$ has linear quotients?
\end{question}
\Cref{cor:linear-resolution-implies-t-gap-free} tells us that $G$ must be $t$-gap-free, and \Cref{prop:failure-for-induced-cycles} tells us that $G$ cannot contain a large induced cycle.

We recall the definition of \emph{vertex splittable} ideals from~\cite[Definition 2.1]{MoradiKhoshAhang}.
\begin{defn}
    A monomial ideal $I \subset \mathbb{K}[X]$ is called \deff{vertex splittable} if it is obtained in the following recursive manner.
    \begin{enumerate}[label=(\alph*)]
        \item $(0)$ is vertex splittable. Any principal monomial ideal is vertex splittable.
        \item If there is a variable $x \in X$ and vertex splittable ideals $J, K$ of $\mathbb{K}[X \setminus \{x\}]$ such that 
        \begin{equation*} 
            I = (xJ + K)\mathbb{K}[X], \quad K \subset J, \quad \text{and} \quad \mingens(I) = \mingens(xJ) \sqcup \mingens(K),
        \end{equation*}
        then $I$ is vertex splittable.
    \end{enumerate}
\end{defn}

The following theorem shows how these ideals fit into the picture with the existing notions. For proofs, see~\cite[Theorems 2.3, 2.4]{MoradiKhoshAhang}.

\begin{thm}
    For monomial ideals, the following chain of implications hold:
    \begin{center}
        Vertex splittable $\Rightarrow$ Linear Quotients $\Rightarrow$ Linear Resolution.
    \end{center}

    A simplicial complex $\Delta$ is \emph{vertex decomposable} (\cite[Definition 1.1]{MoradiKhoshAhang}) iff $I_{\Delta^{\vee}}$ is vertex splittable.
\end{thm}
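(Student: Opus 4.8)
The plan is to prove the three assertions in turn, each by an induction that mirrors a recursive definition. \textbf{Vertex splittable $\Rightarrow$ linear quotients.} Induct on the recursive construction of $I$; the base cases ($I = (0)$ or $I$ principal) are trivial. In the inductive step write $I = xJ + K$ with $J, K \subset \mathbb{K}[X \setminus \{x\}]$ vertex splittable, $K \subseteq J$, and $\mingens(I) = \mingens(xJ) \sqcup \mingens(K)$. By induction pick admissible orders $v_{1} < \cdots < v_{p}$ on $\mingens(J)$ and $w_{1} < \cdots < w_{q}$ on $\mingens(K)$, and order $\mingens(I)$ as $xv_{1} < \cdots < xv_{p} < w_{1} < \cdots < w_{q}$ (here $\mingens(xJ) = \{xv_{1}, \ldots, xv_{p}\}$). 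Using the monomial colon formulas of \Cref{rem:linear-quotients-independent-field}, one gets $\langle xv_{1}, \ldots, xv_{i-1}\rangle : \langle xv_{i}\rangle = \langle v_{1}, \ldots, v_{i-1}\rangle : \langle v_{i}\rangle$, generated by variables. For the colon at $w_{i}$: as $x$ divides no $w_{j}$, each $xv_{j} : w_{i} = x\,(v_{j} : w_{i})$ lies in $\langle x\rangle$, and since $w_{i} \in K \subseteq J$ is a multiple of some $v_{\ell}$ one has $xv_{\ell} : w_{i} = x$; hence this colon equals $\langle x\rangle + \bigl(\langle w_{1}, \ldots, w_{i-1}\rangle : \langle w_{i}\rangle\bigr)$, again generated by variables.

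\textbf{Linear quotients $\Rightarrow$ linear resolution.} Fix an admissible order $u_{1} < \cdots < u_{m}$ on $\mingens(I)$ with all $u_{i}$ of a common degree $d$ (equigeneration is what makes ``linear resolution'' meaningful; without it the same argument run degree by degree shows $I$ is componentwise linear, cf.\ \cite[Theorem 2.7]{JahanZheng}), and induct on $m$. With $I' = \langle u_{1}, \ldots, u_{m-1}\rangle$, the restricted order is admissible, so $S/I'$ has a linear resolution by induction, while $I' : \langle u_{m}\rangle$ is generated by variables, so $S/(I' : \langle u_{m}\rangle)$ is resolved by the (linear) Koszul complex. Forming the mapping cone of these two resolutions along
\begin{equation*}
    0 \longrightarrow \bigl(S/(I':\langle u_{m}\rangle)\bigr)(-d) \xrightarrow{\ \cdot\, u_{m}\ } S/I' \longrightarrow S/I \longrightarrow 0
\end{equation*}
and tracking internal degrees shows that the $i$-th term of the cone is, for $i \ge 1$, a direct sum of copies of $S(-(d+i-1))$. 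Since $\operatorname{Tor}^{S}_{i}(S/I, \mathbb{K})$ is a subquotient of that term after $-\otimes_{S}\mathbb{K}$, it is concentrated in degree $d+i-1$, so $\beta_{i,j}(S/I) = 0$ for $j \ne i+d-1$; thus $S/I$, hence $I$, has a linear resolution.

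\textbf{$\Delta$ vertex decomposable $\iff$ $I_{\Delta^{\vee}}$ vertex splittable.} Induct on $\md{V(\Delta)}$, matching the base cases (a simplex, possibly with ghost vertices, or $\{\emptyset\}$) with $I_{\Delta^{\vee}}$ principal or zero. For the step, fix a vertex $v$ and set $X = V(\Delta)$, $X' = X \setminus \{v\}$. From $\mingens(I_{\Sigma^{\vee}}) = \{x_{X \setminus F} : F \text{ a facet of } \Sigma\}$, sorting facets of $\Delta$ by whether they contain $v$ yields the ideal identity $I_{\Delta^{\vee}} = x_{v}\,I_{(\operatorname{del}_{\Delta}(v))^{\vee}} + I_{(\operatorname{lk}_{\Delta}(v))^{\vee}}$ (duals taken in $\mathbb{K}[X']$). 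Since $\operatorname{lk}_{\Delta}(v) \subseteq \operatorname{del}_{\Delta}(v)$, Alexander duality gives $I_{(\operatorname{lk}_{\Delta}(v))^{\vee}} \subseteq I_{(\operatorname{del}_{\Delta}(v))^{\vee}}$, matching $K \subseteq J$; a short computation shows the $\mingens$-splitting condition is equivalent to $v$ being a shedding vertex (no facet of $\operatorname{lk}_{\Delta}(v)$ is a facet of $\operatorname{del}_{\Delta}(v)$); and, conversely, in any witness $I_{\Delta^{\vee}} = x_{v}J + K$ of vertex splittability the constraint $K \subseteq J$ forces $J = I_{(\operatorname{del}_{\Delta}(v))^{\vee}}$ and $K = I_{(\operatorname{lk}_{\Delta}(v))^{\vee}}$. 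Applying the induction hypothesis to $\operatorname{del}_{\Delta}(v)$ and $\operatorname{lk}_{\Delta}(v)$ then closes both directions.

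I expect this last part to be the main obstacle: one must pin down the Alexander-duality conventions precisely — in particular check that facets of $\operatorname{del}_{\Delta}(v)$ which are not facets of $\Delta$ contribute only \emph{non}-minimal generators to $x_{v}I_{(\operatorname{del}_{\Delta}(v))^{\vee}} + I_{(\operatorname{lk}_{\Delta}(v))^{\vee}}$ (this is what makes the $\mingens$-splitting/shedding equivalence work) — and must match the degenerate complexes (ghost vertices, the void and irrelevant complexes) with the right principal-or-zero ideals so that the two inductions terminate in step.
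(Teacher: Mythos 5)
The paper does not actually prove this theorem; it defers entirely to \cite[Theorems 2.3, 2.4]{MoradiKhoshAhang}. Your write-up supplies the proofs, and they are correct and in substance the standard ones from that reference. For the first implication, the concatenated order $xv_{1} < \cdots < xv_{p} < w_{1} < \cdots < w_{q}$ is exactly the right choice, and your two colon computations are sound: the identity $xv_{j} : xv_{i} = v_{j} : v_{i}$ handles the first block, and the observation that $w_{i} \in K \subseteq J$ forces some $xv_{\ell} : w_{i} = x$ is precisely what makes the colon at $w_{i}$ equal $\langle x\rangle + (\langle w_{1}, \ldots, w_{i-1}\rangle : \langle w_{i}\rangle)$. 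For the second implication, the iterated mapping cone over the short exact sequence you display, with the shifted Koszul complex resolving $S/(I' : \langle u_{m}\rangle)$, is the classical argument; the degree bookkeeping ($i$-th term a sum of copies of $S(-(d+i-1))$ for $i \ge 1$) is right, and passing to a subquotient of the (possibly non-minimal) cone tensored with $\mathbb{K}$ legitimately bounds the Betti numbers. The third part is the least detailed, but you correctly identify the decomposition $I_{\Delta^{\vee}} = x_{v}\,I_{(\operatorname{del}_{\Delta}(v))^{\vee}} + I_{(\operatorname{lk}_{\Delta}(v))^{\vee}}$ and, more importantly, the crux that the $\mingens$-splitting condition is equivalent to $v$ being a shedding vertex (equivalently, every facet of $\operatorname{del}_{\Delta}(v)$ is a facet of $\Delta$, so that $x_{v}I_{(\operatorname{del}_{\Delta}(v))^{\vee}}$ contributes only minimal generators); your closing caveats about Alexander-duality conventions and degenerate complexes flag exactly the points that need care to turn the sketch into a complete proof, and none of them hides an obstruction.
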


\begin{question}
    Suppose $t$ and $G$ satisfy the hypotheses of either \Cref{thm:chordal-LQ-LR-gap-equivalent} or \Cref{thm:gap-free-t-claw-free-LQ}, i.e., one of the following is true.
    \begin{itemize}
        \item $t \ge 2$ and $G$ is a $t$-gap-free chordal graph.
        \item $t \ge 3$ and $G$ is gap-free and $t$-claw-free.
    \end{itemize}
    Is it true that $\conn_{t}(G)$ is vertex splittable?
\end{question}
The case $t = 2$ and $G$ is a (2-)gap-free chordal graph is answered by~\cite{DeshpandeRoySinghTuyl} since then $G$ is co-chordal. 

\printbibliography
\end{document}